\allowdisplaybreaks \linespread{1.1}
\newtheorem{theorem}{Theorem}
\newtheorem{lemma}[theorem]{Lemma}
\newtheorem{remark}[theorem]{Remark}
\theoremstyle{definition}
\newtheorem{definition}[theorem]{Definition}
\newtheorem{definition-lemma}[theorem]{Definition-Lemma}
\def\a{\alpha}
\def\b{\beta}
\def\k{\Bbbk}
\newcommand{\vi}{$\;\,{\sf {(i)}}\;$}
\newcommand{\vii}{$\;{\sf {(ii)}}\;$}
\newcommand{\viii}{${\sf {(iii)}}\;$}
\newcommand{\eps}{{\varepsilon}}
\newcommand{\ot}{{\otimes}}
\newcommand{\Q}{\mathbb{Q}}
\newcommand{\Z}{\mathbb{Z}}
\begin{document}

\title[Quantization of the Lie bialgebra of String Topology]{Quantization of the Lie Bialgebra\\ of String Topology}

\author{Xiaojun Chen, Farkhod Eshmatov, Wee Liang Gan}
\address{Xiaojun Chen and Farkhod Eshmatov\\ Department of Mathematics\\ University of Michigan\\ Ann Arbor, MI 48109, U.S.A.}
\email{xch@umich.edu, eshmatov@umich.edu}

\address{Wee Liang Gan\\ Department of Mathematics\\ University of California\\ Riverside, CA 92521, U.S.A.}
\email{wlgan@math.ucr.edu}


\maketitle

\begin{abstract}
Let $M$ be a smooth, simply-connected, closed oriented manifold,
and $LM$ the free loop space of $M$.
Using a Poincare duality model for $M$, we show that the reduced equivariant homology of $LM$ has the structure of a Lie bialgebra, and we construct a Hopf algebra which quantizes the Lie bialgebra.
\end{abstract}


\section{Introduction}

Let $M$ be a smooth, simply-connected, closed oriented manifold, and $LM$ the free loop space of 
$M$.
In this paper, by the \emph{reduced equivariant homology} of $LM$, we mean the $S^1$-equivariant 
homology of $LM$ relative to a point which is
a constant loop in $M$. We give an 
algebraic chain model of a Lie bialgebra structure 
on the reduced equivariant homology of the free loop space $LM$, and we construct a quantization of the Lie bialgebra.  Chas and Sullivan have shown in \cite{CS02}
that the $S^1$-equivariant  homology of $LM$ relative to the constant loops has the natural structure of a Lie bialgebra.  Our constructions are compatible with the one which they gave.

The motivations for the constructions in this paper are as follows.
In \cite{Goldman}, Goldman showed that the free homotopy
classes of closed curves on a Riemann surface $\mathcal S$
form a Lie algebra; and subsequently in \cite{Turaev}, Turaev proved that
they in fact form a Lie bialgebra. Moreover, Turaev showed
that the Lie bialgebra admits a quantization, which can be realized as
the skein algebra of links in $\mathcal S \times I$, where $I$ is the
unit interval. In  \cite{CS99} and \cite {CS02}, Chas and
Sullivan generalized the Lie bialgebra of Goldman and Turaev to the higher
dimensional case, and initiated the subject of string topology.
Algebraic chain models for string topology have been studied by many authors. 
In particular, Felix and Thomas \cite{FT}
used a Poincare duality model of the manifold $M$ that
was constructed by Lambrechts and Stanley \cite{LS07} to give a chain model of
the BV-algebra structure on the singular homology of $LM$ (see also \cite{HL}).

On the other hand, there is a quiver
analogue of the Lie bialgebra of Goldman and Turaev. This is the necklace Lie bialgebra 
studied by V. Ginzburg \cite{Ginzburg} and T. Schedler \cite{Schedler}. 
Schedler constructed a quantization of the  
necklace Lie bialgebra in analogy with Turaev's quantization. 
In this paper, we show that the Poincare duality model of $M$ can also be used to 
construct a chain model of a Lie bialgebra structure on the reduced equivariant homology of $LM$.
Furthermore, a quantization of the Lie bialgebra can be constructed following Schedler's method.

We will prove the following theorem
 (cf. Theorems~\ref{thm_of_Jones}, \ref{thm_Liebialg} and \ref{thm_quantization} of this paper):

\begin{theorem}\label{main_thm}
\vi Let $V$ be a counital coaugmented 
simply-connected DG open Frobenius algebra  over  a field $\k$ of
characteristic zero. 
 Then there is a natural involutive Lie bialgebra structure  on the reduced cyclic homology of $V$,
  and a Hopf algebra over $\k[h]$ (where $h$
is a formal parameter) which quantizes this Lie bialgebra.

\vii Given a smooth, closed, oriented and simply connected manifold $M$, there is a
counital coaugmented simply-connected DG open Frobenius
algebra $V$ over $\Q$ which models the chain complex of $M$, and the reduced cyclic homology
of $V$ is isomorphic to the reduced equivariant homology of $LM$. 
Therefore, the reduced equivariant homology of $LM$ has a Lie bialgebra structure, and
the Hopf algebra of \vi quantizes this Lie bialgebra.
\end{theorem}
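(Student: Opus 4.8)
The plan is to prove the theorem in two independent halves, corresponding to parts (i) and (ii), and then combine them. For part (i), the strategy is purely algebraic and follows the template set by Schedler's treatment of the necklace Lie bialgebra, transported to the setting of a DG open Frobenius algebra. First I would recall the cyclic homology complex of $V$ and use the Frobenius pairing together with the coproduct to write down explicit chain-level operations: a bracket obtained by pairing one ``input'' slot of one cyclic word against one output slot of another (using the Frobenius copairing to splice the two cyclic chains into a single longer one), and a cobracket obtained by using the coproduct of $V$ to split a single cyclic word into two, pairing an internal slot against another internal slot of the same word. I would then verify that these operations descend to reduced cyclic homology and that they satisfy the axioms of an involutive Lie bialgebra (antisymmetry, Jacobi, co-Jacobi, and the compatibility/cocycle condition relating bracket and cobracket). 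The coaugmentation and counit are what let me pass to the reduced theory and guarantee the involutivity, i.e.\ that the composite of cobracket followed by bracket vanishes.

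For the quantization in part (i), I would follow Schedler's method directly: the Hopf algebra over $\k[h]$ is built on (a completion of) the symmetric algebra on the reduced cyclic homology, with a deformed product in which the order-$h$ term is governed by the bracket and the coproduct whose order-$h$ term is governed by the cobracket. The key point to check is associativity and coassociativity of the deformed (co)product modulo the relations, together with the Hopf compatibility; this is exactly where the involutivity and the Lie bialgebra axioms from the first step are consumed. I expect the main obstacle of part (i) to be the verification that the deformed product is associative to all orders in $h$, not merely to first order: establishing this requires a careful bookkeeping of how the Frobenius copairing interacts with multiple simultaneous splicings, and is the technical heart of Schedler's construction that must be adapted to the DG and cyclic (rather than quiver) setting.

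For part (ii), the work is to produce the geometric input feeding into part (i). The plan is to invoke the Lambrechts--Stanley Poincar\'e duality model: for a simply-connected closed oriented manifold $M$, there is a commutative DG algebra $A$ over $\Q$, quasi-isomorphic to the rational cochains of $M$, carrying a nondegenerate pairing of degree $-\dim M$ making it a Frobenius algebra; dualizing gives the DG open Frobenius algebra $V$ with its counit and coaugmentation. First I would verify that $V$ indeed satisfies all the hypotheses of part (i) (simple-connectivity, counital, coaugmented). The remaining step is to identify the reduced cyclic homology of $V$ with the reduced $S^1$-equivariant homology of $LM$. Here I would appeal to the chain model already established in the paper (the result cited as Theorem~\ref{thm_of_Jones}), which is the cyclic/equivariant analogue of the Jones isomorphism: negative cyclic or ordinary cyclic homology of the (co)chains of $M$ computes the equivariant homology of $LM$, and the reduction relative to the constant loops matches the reduction to \emph{reduced} cyclic homology.

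Finally, I would assemble the two halves. Having checked that $V$ from part (ii) is an admissible input for part (i), the Lie bialgebra structure and its quantization produced abstractly on $\overline{HC}(V)$ transport, via the isomorphism with the reduced equivariant homology, to the corresponding structures on $LM$; and one checks these agree with the Chas--Sullivan operations, as asserted in the introduction. The conceptually hardest part of the whole argument remains the all-orders quantization of step~(i); part~(ii) is comparatively formal once the Lambrechts--Stanley model and the Jones-type isomorphism are in hand, the only subtlety being the compatibility of the various reductions (relative to constant loops on the topological side, relative to the (co)augmentation on the algebraic side).
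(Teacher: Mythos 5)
Your treatment of the Lie bialgebra and of part (ii) matches the paper's route: the bracket and cobracket you describe (splicing two cyclic words via the pairing $\eps(a_i\cdot b_j)$, splitting one cyclic word via the coproduct) are exactly the operations of Theorem~\ref{thm_Liebialg}, and part (ii) is, as you say, formal given the Lambrechts--Stanley model and the Jones-type identification of Theorem~\ref{thm_of_Jones}. The gap is in the quantization step, and it is a gap of substance, not of detail. What you describe --- a completion of the symmetric algebra on reduced cyclic \emph{homology} with a product deformed order by order in $h$ so that the first-order term is the bracket --- is not Schedler's construction, and as stated it does not define anything: the higher-order terms of the product and coproduct are left unspecified, and producing them so that associativity, coassociativity and the Hopf compatibility hold to all orders is precisely the Lie bialgebra quantization problem (Etingof--Kazhdan-level difficulty), not a verification one can expect to push through by ``bookkeeping of splicings.'' You have correctly sensed that this is where your plan is stuck, but you have misdiagnosed why: in the actual construction the product is never deformed and its associativity is trivial.

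The missing idea is the \emph{height} decoration, implemented at the chain level rather than on homology. The paper forms cyclic words in $C\otimes\k[\mu,\mu^{-1}]$, takes the symmetric algebra on such height-decorated words with all heights distinct, and quotients by the relations (\ref{equiv_rel1}) and (\ref{equiv_rel2}): exchanging heights across two different cyclic words costs a spliced (bracket-type) term, and exchanging heights within one cyclic word costs $h$ times a split (cobracket-type) term. The product is then just concatenation with heights shifted upward (associativity is immediate); $h$ enters only through the relations and through the labeling combinatorics defining $\Delta_n$; and $A\cong U(L)[h]$ as $\k[h]$-modules by the PBW-type Theorem~\ref{PBW}. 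The genuinely new work in the DG setting --- and the paper's main computation --- is Lemma~\ref{subcomplex}, showing that the differential of Definition~\ref{diffb} (which must itself be corrected by height shifts, since the coproduct $a\mapsto a'\otimes a''$ creates a new letter) preserves the submodule $\tilde B$, so that the Hopf structure is a structure of DG Hopf algebra. Finally, because this construction lives on chains, one more step is needed that your plan omits: Quillen's theorem that $H_*(U(L),b)$ is the enveloping algebra of $H_*(L,d)$, which is what transports the chain-level quantization to the asserted Hopf algebra quantizing the Lie bialgebra on reduced cyclic homology. Without the heights (or some equivalent explicit device), and without this passage from chains to homology, the quantization half of part (i) does not go through.
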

The definition of a DG  open Frobenius algebra will be
given later  in Definition \ref{def_Frob}. By  cyclic homology, we mean the
homology of the Connes' complex for a coalgebra (see Definition \ref{hoch_cyc}). 
It would be interesting to find a geometric interpretation of the Hopf algebra in
Theorem  \ref{main_thm}, similar to Turaev's construction.
The construction in Theorem \ref{main_thm}\vii
of the DG open Frobenius algebra of a manifold is due to Lambrechts and
Stanley (see \cite{LS07}).

In \cite{CFP}, Cattaneo, Fr\"olich and Pedrini gave a topological
field theoretic interpretation
of string topology. In particular, they showed that under some mild
conditions of the gauge group, the Poisson bracket of the
generalized Wilson loops, when applied to two equivariant homology
classes in $LM$, is the same as the generalized Wilson loop applied
to the Lie bracket of two equivariant homology classes. They also
suggested several approaches of quantizing this field theory (see
\cite{CFP} \S8.3). We expect that the Hopf algebra given by Theorem \ref{main_thm}
 will appear in the associated quantum field
theory, which they conjectured to be related to algebraic structures on
the Vassiliev homology of links in higher dimensional manifolds.

Another  interesting problem is the existence of natural representations
   of the string topology Lie bialgebra and its quantization.
In the Riemann surface case, Goldman (\cite{Goldman}) showed that there is a Lie algebra homomorphism from the homotopy classes
of curves to the smooth functions on the moduli space of flat connections on the surface. 
 Similar results in the quiver case have been obtained by
Ginzburg (\cite{Ginzburg}) and Schedler (\cite{Schedler}). 
In the string topology case, a Lie algebra homomorphism from the equivariant homology to the smooth
functions on some related moduli space has been studied by Abbaspour, Zeinalian, and Tradler 
(see \cite{ATZ} and \cite{AZ}). Besides
 Turaev, the quantization of  the Lie algebra of curves has also been studied by
Andersen, Mattes and Reshetikhin (\cite{AMR1}, \cite{AMR2}) from the Vassiliev knot theory point of view.
We hope to address this problem in the future and relate it to
the work of Cattaneo and Rossi (see \cite{CR} and references therein).

The rest of the paper is devoted to the proof of
Theorem~\ref{main_thm}. In Section~\ref{homology_model} we recall
a chain complex model for the free loop space of manifolds, due independently to
K.-T. Chen \cite{Chen77}
and J.D.S. Jones \cite{Jo87}. 
From this, one can construct a chain model for the reduced
equivariant homology of the free loop space.
In Section~\ref{Liebialg} we construct the Lie
bialgebra structure on the reduced equivariant homology of the free loop
space, and in the last section, Section~\ref{quantization}, we
construct the Hopf algebra which quantizes the Lie bialgebra, following the work of Schedler. Our main computation is contained in the proof of Lemma \ref{subcomplex}.

The first author would like to thank Professor Yongbin Ruan for his encouragement during the preparation of this paper.

\section{Reduced equivariant homology}\label{homology_model}

\subsection{Cyclic homology of coalgebras}
First, let us recall the definitions of Hochschild homology and  cyclic homology of a coalgebra.

Let $(C, d)$ be a  DG coalgebra over a field $\k$ of characteristic 0. 
For any element $a\in C$, denote the coproduct of $a$ by $\sum_{(a)} a'\otimes a''$.
We shall write $C[1]$ for $C$ with degrees of elements shifted down by 1.

\begin{definition}\label{hoch_cyc}

\vi The {\it Hochschild
chain complex} of $C$, denoted by $(\mathrm{Hoch}_*(C), b)$, is the vector space 
$\prod_{n=0}^\infty C\otimes C[1]^{\otimes n}$
with the differential $b$ defined on homogeneous elements by:
\begin{eqnarray}&& b(a_0,a_1,\cdots,a_n)\label{twisted_diff}\\
&:=&
-\sum_{i=0}^n(-1)^{\varepsilon_{i-1}}(a_0,\cdots,da_i,\cdots,a_n)\label{internal}\\
&&+\sum_{i=0}^n \sum_{(a_i)} (-1)^{\varepsilon_{i-1}+|a_i'|-1}(a_0,\cdots,a_i',a_i'',\cdots,a_n)\label{external1}\\
&&+\sum_{(a_0)} (-1)^{(|a_0'|-1)(\varepsilon_n-|a_0'|)}(a_0'',a_1,\cdots,a_n,a_0'),\label{external2}
\end{eqnarray}
where $\varepsilon_i=|a_0|+\cdots+|a_i|-i$. The associated homology
is called the {\it Hochschild homology}, and is denoted by
$\mathrm{HH}_*(C)$.

\vii For any
$\a=(a_0,\cdots,a_n)\in\mathrm{Hoch}_*(C)$, define
$$t(\a):=(-1)^{(|a_0|-1)(\varepsilon_{n}-|a_0|)}
(a_1,\cdots,a_{n}, a_0),$$
and let $N=id+t+\cdots+t^{n}$. The image of $N$, denoted by
$\mathrm{CC}_*(C)$, is a subcomplex of $\mathrm{Hoch}_*(C)$, called the
{\it Connes  complex}. The homology of $\mathrm{CC}_*(C)$ is called the {\it
cyclic homology} of $C$, and is denoted by $\mathrm{HC}_*(C)$.
\end{definition}
In the above definition of a Hochschild chain complex, we call
(\ref{internal}) the internal differential, and
(\ref{external1})+(\ref{external2}) the external differential.
The Hochschild homology of a coalgebra is also known as the coHochschild homology (see \cite{HPS}).

\subsection{Poincare duality model} \label{pdm}
Throughout this paper, we assume that $M$ is a simply-connected, compact, oriented smooth manifold
of dimension $\mathsf m$.
Denote by $LM$ the free loop space of $M$. There is a natural $S^1$-action on $LM$.
We shall take $\k$ to be the field of rational numbers,
and write $C_*(-)$ and $C^*(-)$ for the singular chain complex and the singular cochain complex, respectively. We grade $C^*(-)$ negatively. 

The chain complex $C_*(M)$ has a partially defined
product given by intersection of {\it transversal} chains, and
a coproduct given by the Alexander-Whitney approximation of the diagonal embedding
$M\hookrightarrow M\times M$. In fact, $C_*(M)$ is a partially defined (non-commutative) DG open Frobenius algebra over $\Z$; however, over $\Q$, one may define
the Frobenius algebra structure fully, due to 
a result by P. Lambrechts and D. Stanley \cite{LS07}:

\begin{definition}[Open Frobenius algebra]\label{def_Frob}
Let $V$ be a DG vector space over $\k$. A DG {\it open Frobenius algebra}
(of degree $\mathsf m$) on $V$
is the triple $(V,\cdot, \Delta)$ such that:

\vi $(V,\cdot)$ is a 
commutative DG algebra (whose product is of degree $-\mathsf m$); 

\vii $(V,\Delta)$ is a cocommutative DG coalgebra;
 
\viii The following
identity, called the {\it module compatibility}, holds: for any
$a,b\in V$,
\begin{equation}\label{Frobenius_id}\Delta(a\cdot b)
=\sum (-1)^{\mathsf m |a'|} a'\otimes a''\cdot b
= \sum a\cdot b'\otimes b'',\end{equation} 
where $\Delta a=\sum a'\otimes
a''$, and $\Delta b=\sum a'\otimes b''$. 
\end{definition}

\begin{theorem}[Lambrechts and Stanley]\label{poincare} 
There is a finite dimensional commutative DG algebra $A$ such that
$A$ is simply-connected, $A$ is quasi-isomorphic to $C^*(M)$, and there is an $A$-bimodule
isomorphism of degree $\mathsf m$ from $A$ to its dual $A^\vee$ that induces the Poincare duality isomorphism $H^*(M)\to H_{*+\mathsf m}(M)$ on homology.
\end{theorem}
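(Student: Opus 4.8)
The plan is to build the Poincare duality model of Lambrechts and Stanley in three stages. First I would produce a finite-type commutative model: since $M$ is simply connected with finite-dimensional rational cohomology in each degree, Sullivan's theory gives a minimal model $(\Lambda W, d)$ with each $W^i$ finite-dimensional and $W^0 = W^1 = 0$, equipped with a quasi-isomorphism to $C^*(M)$. This model has infinite total dimension, so the second task is to truncate it. Because $M$ is closed of dimension $\mathsf m$ we have $H^i(M) = 0$ for $i > \mathsf m$, and I would pass to the quotient CDGA $A'$ obtained by first killing the (automatically differential) ideal $\bigoplus_{i>\mathsf m}(\Lambda W)^i$ and then killing a complement of the cocycles, together with the coboundaries, in the top degree $\mathsf m$. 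A short computation shows the quotient map $\Lambda W \to A'$ is a quasi-isomorphism, and that $A'$ is finite-dimensional, simply connected, concentrated in degrees $[0,\mathsf m]$, with $(A')^{\mathsf m} \cong H^{\mathsf m}(M) \cong \k$.

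The second stage introduces the intersection pairing. Orientability of $M$ furnishes a chain map $\varepsilon\colon A' \to \k$ projecting onto the one-dimensional top degree; one checks $\varepsilon \circ d = 0$ since coboundaries vanish in top cohomology. Setting $\langle a, b\rangle := \varepsilon(a\cdot b)$ defines a graded-symmetric bilinear form, and its adjoint $\phi\colon A' \to (A')^\vee$, $\phi(a) = \langle a, -\rangle$, is a morphism of $A'$-bimodules which on homology realizes exactly the degree-$\mathsf m$ Poincare duality isomorphism $H^*(M) \to H_{*+\mathsf m}(M)$; in particular $\phi$ is a quasi-isomorphism. The radical $K := \ker\phi = \{a : \langle a, b\rangle = 0 \text{ for all } b\}$ is automatically a differential ideal, so the quotient $A := A'/K$ is a CDGA carrying a nondegenerate descended form. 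If $K$ were acyclic I would be done: $A$ is finite-dimensional, simply connected, quasi-isomorphic to $A'$ and hence to $C^*(M)$, and $\phi$ descends to an $A$-bimodule isomorphism $A \xrightarrow{\;\sim\;} A^\vee$ of degree $\mathsf m$ inducing Poincare duality, exactly as required.

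The hard part is precisely that $K$ need not be acyclic: $\phi$ is a quasi-isomorphism but is in general neither injective nor surjective, so its kernel can carry cohomology and the naive quotient $A'/K$ may fail to model $M$. Removing this obstruction is the technical core of the Lambrechts--Stanley construction. I would resolve it by an obstruction argument carried out degree by degree, modifying $A'$ within its quasi-isomorphism type and compatibly with the orientation $\varepsilon$ --- adjoining auxiliary generators to kill $H^*(K)$ while leaving $H^*(A')$ and the form unchanged --- until the radical becomes acyclic, at which point the quotient of the previous stage delivers the model. The most delicate point is the middle degree $\mathsf m/2$, especially when $\mathsf m$ is even, where the correction data must be chosen compatibly with the (anti)symmetry of $\langle\,,\,\rangle$; this is exactly where the argument of Lambrechts and Stanley (\cite{LS07}) does its real work, and I would invoke their construction to finish.
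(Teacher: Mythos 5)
Your overall route is the same as the paper's: the paper's entire proof of Theorem~\ref{poincare} is the single observation that it follows from applying \cite[Theorem 1.1]{LS07} to the Sullivan minimal model of $M$, and you likewise defer the genuinely hard step (modifying the model so that the radical of the pairing becomes acyclic) to \cite{LS07}. Your stage-two algebra is also sound: $\langle a,b\rangle=\varepsilon(ab)$ gives a morphism of (bi)modules $\phi\colon A'\to (A')^\vee$, its kernel $K$ is a differential ideal, and if $K$ is acyclic the quotient $A'/K$ is a finite-dimensional model with nondegenerate pairing, as required.

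There is, however, a genuine error in your truncation step. You quotient $(\Lambda W)$ by $\bigoplus_{i>\mathsf m}(\Lambda W)^i$ and then, in degree $\mathsf m$, by $B^{\mathsf m}\oplus C$, where $C$ is a complement of the cocycles $Z^{\mathsf m}$ and $B^{\mathsf m}$ is the coboundaries, so as to force $(A')^{\mathsf m}\cong H^{\mathsf m}(M)\cong\k$. But $B^{\mathsf m}$ is by definition the image of $d\colon(\Lambda W)^{\mathsf m-1}\to(\Lambda W)^{\mathsf m}$, so killing it makes the differential out of degree $\mathsf m-1$ identically zero in the quotient; hence $H^{\mathsf m-1}(A')=(\Lambda W)^{\mathsf m-1}/B^{\mathsf m-1}$, which strictly contains $H^{\mathsf m-1}(M)$ whenever degree $\mathsf m-1$ contains a non-cocycle. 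Concretely, for $M=S^2\times S^2$ the minimal model has degree-$3$ generators $y_1,y_2$ with $dy_1=x_1^2$, $dy_2=x_2^2$; your quotient kills $x_1^2,x_2^2\in B^4$, so $[y_1],[y_2]$ become nonzero classes in $H^3$ of the quotient, while $H^3(S^2\times S^2)=0$. Thus the claimed ``short computation'' that $\Lambda W\to A'$ is a quasi-isomorphism fails. The correct truncation kills only the complement $C$ of $Z^{\mathsf m}$ (any subspace of the top degree of the truncated algebra is automatically a differential ideal, so this is legitimate); then the quotient map is a quasi-isomorphism, at the cost that $(A')^{\mathsf m}\cong Z^{\mathsf m}$ is not one-dimensional. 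This costs you nothing downstream: define the orientation $\varepsilon$ as the projection $(A')^{\mathsf m}\cong Z^{\mathsf m}\to Z^{\mathsf m}/B^{\mathsf m}\cong H^{\mathsf m}(M)\cong\k$ (extended by zero in lower degrees), which still satisfies $\varepsilon\circ d=0$, and run your stages two and three verbatim; one-dimensionality of the top degree is recovered only at the very end, after quotienting by the acyclic radical, since a connected Poincar\'e duality algebra automatically has $A^{\mathsf m}\cong(A^0)^\vee\cong\k$.
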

\begin{proof}
This is immediate from applying \cite[Theorem 1.1]{LS07} to the Sullivan minimal model of $M$.
\end{proof}

The DG algebra $A$ in Theorem \ref{poincare} is called a Poincare duality model for $M$.
For the rest of this section, we let $V=A^\vee$, where $A$ is a Poincare duality model.
Thus, $V$ is a   cocommutative DG coalgebra. The linear isomorphism
from $A$ to $V[\mathsf m]$ induces the structure of a 
commutative DG algebra on $V$ whose product has degree $-\mathsf m$. Moreover, the coproduct is a morphism of $V$-bimodules. Therefore, $V$ is a DG open Frobenius algebra which is simply-connected, 
has a counit $\eps: V\to \k$, and a coaugmentation 
$\eta: \k\hookrightarrow V$.
Let $C=\mathrm{Coker}(\eta)$, the coaugmentation coideal of $V$. 
The homology of $\mathrm{CC}_*(C)$ is called the \emph{reduced cyclic homology} of $V$.

The following theorem is essentially due to  Jones \cite{Jo87}. 

\begin{theorem}\label{thm_of_Jones}
The reduced equivariant chain complex of $LM$  is quasi-isomorphic to
$\mathrm{CC}_*(C)[1]$.
\end{theorem}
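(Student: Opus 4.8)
The plan is to exhibit an explicit quasi-isomorphism realizing the Jones-Chen isomorphism at the equivariant level, then pass to the reduced setting. First I would recall the nonequivariant result: for the free loop space, the Chen-Jones theorem identifies the chain complex of $LM$ with a Hochschild-type complex, and here the relevant model is the coalgebra version. Concretely, since $V = A^\vee$ is the dual of a Poincare duality model $A \simeq C^*(M)$, the coalgebra $C = \mathrm{Coker}(\eta)$ is a model for the reduced chains, and the Hochschild complex $\mathrm{Hoch}_*(C)$ of Definition \ref{hoch_cyc} is designed to compute $H_*(LM)$. The key point is that Jones's original cosimplicial/cyclic construction, which used the cochain algebra $C^*(M)$ together with its cyclic structure, dualizes: the cyclic bar construction on the algebra side becomes the Connes cyclic complex $\mathrm{CC}_*(C)$ on the coalgebra side, and the $S^1$-action on $LM$ is encoded precisely by the cyclic operator $t$ and the norm map $N$ appearing in the definition of $\mathrm{CC}_*(C)$.

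The concrete steps I would carry out are as follows. I would first establish the nonequivariant statement that $\mathrm{Hoch}_*(C)$ (with the twisted differential $b$ combining internal, external1, and external2 terms) computes the homology of $LM$ relative to the constant loops, using the Chen iterated-integral / Jones cosimplicial model and the fact that $A$ is a finite-dimensional Poincare duality model quasi-isomorphic to $C^*(M)$. Since quasi-isomorphisms of such algebras induce quasi-isomorphisms of the associated Hochschild complexes, one may replace $C^*(M)$ by $A$ and hence dualize to the coalgebra $C$; the finite-dimensionality from Theorem \ref{poincare} is what makes the dualization clean and the tensor products behave correctly. Second, I would invoke the standard relationship between $S^1$-equivariant homology and cyclic homology: Jones showed that the $S^1$-equivariant homology of $LM$ is computed by the cyclic complex built from the same cyclic object whose Hochschild complex computes $H_*(LM)$. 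In the characteristic-zero setting, the Connes complex $\mathrm{CC}_*(C) = \mathrm{Im}(N)$ computes cyclic homology and thus the equivariant theory. Third, the degree shift $[1]$ accounts for the difference between the (co)simplicial degree conventions and the geometric $S^1$-bundle grading, i.e.\ the shift inherent in passing from the total complex of the cyclic bicomplex to the Connes model.

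The reduced/relative aspect requires care: I would verify that quotienting $V$ by the coaugmentation to form $C$ corresponds geometrically to working relative to the constant loops, i.e.\ relative to $M \hookrightarrow LM$ as constant loops. This matches the definition of reduced equivariant homology given in the introduction, namely the $S^1$-equivariant homology of $LM$ relative to a constant loop. I expect the main obstacle to be the careful bookkeeping of signs and degree shifts in matching the coalgebra cyclic structure (the operator $t$, the sign $(-1)^{(|a_0|-1)(\varepsilon_n - |a_0|)}$, and the norm $N$) to the geometric $S^1$-action, together with checking that the twisted differential $b$ of Definition \ref{hoch_cyc} agrees with the differential coming from the Chen-Jones model after dualization. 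Since the theorem is attributed as \emph{essentially due to} Jones, I anticipate that the proof reduces to translating Jones's algebra-side construction into the dual coalgebra language and confirming the reduction and shift conventions, rather than proving a genuinely new homological fact.
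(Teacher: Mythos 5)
Your overall strategy---invoke Jones's theorem and then the characteristic-zero comparison between the equivariant/cyclic theory and Connes' complex, keeping track of the reduction and the shift---is the same as the paper's, but two of your steps are problematic, and one of them hides exactly where the paper does its real work.

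First, you conflate ``relative to the constant loops $M\hookrightarrow LM$'' with ``relative to a point.'' These are different groups, and the paper's reduced equivariant homology is the second one: the coaugmentation coideal $C=\mathrm{Coker}(\eta)$ kills only the one-dimensional subspace $\eta(\k)\subset V$, i.e.\ the chains on a single constant loop, not a copy of the chains on all of $M$. So the verification you propose in your last paragraph (that passing from $V$ to $C$ corresponds to working relative to $M\hookrightarrow LM$) is attempting to prove a false identification; what is true, and what Theorem \ref{thm_of_Jones} asserts, is the statement relative to a point. (The relative-to-constant-loops group of Chas--Sullivan \cite{CS02} requires a further quotient.) Relatedly, your nonequivariant warm-up misidentifies the model: $H_*(LM)$ is computed by $\mathrm{Hoch}_*(V)$ of Definition \ref{hoch_cyc}, whose zeroth tensor factor is the counital coalgebra $V$, not by $\mathrm{Hoch}_*(C)$; the complex built entirely from $C$ enters only as the ambient space of the cyclic invariants $\mathrm{CC}_*(C)=\mathrm{Im}(N)$.

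Second, and more seriously, your sentence ``in the characteristic-zero setting, the Connes complex $\mathrm{CC}_*(C)=\mathrm{Im}(N)$ computes cyclic homology and thus the equivariant theory'' essentially assumes what has to be proved. What Jones's theorem gives (quoted in the paper via \cite{Jo87} and \cite[Theorem 1.5.1]{CV}) is that $C_*^{S^1}(LM)$ is quasi-isomorphic to the $u$-model $(\mathrm{Hoch}_*(V)[u],\,b+u^{-1}B)$ attached to the counital coalgebra $V$; the theorem you must prove concerns the non-counital coideal $C$ and the \emph{reduced} equivariant theory. Bridging that gap is the actual content of the paper's proof: it replaces $\mathrm{Hoch}_*(V)$ by the normalized complex, forms the reduced complex $\overline{\mathrm{Hoch}}_*(V)=C\oplus\prod_{n\ge 1}V\otimes C[1]^{\otimes n}$ (note the first tensor factor is still $V$ for $n\ge 1$), and shows that the explicit map sending $\a\otimes u^n$ to $B(\a)$ for $n=0$ and to $0$ for $n>0$ is a quasi-isomorphism onto $\mathrm{CC}_*(C)[1]$, using that the resulting bicomplex is first-quadrant and a standard filtration argument \cite[Proposition 2.2.14]{Loday}. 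This step cannot be dismissed as sign and shift bookkeeping: unlike the Hochschild case, reduced and unreduced cyclic complexes are \emph{not} quasi-isomorphic (the full equivariant homology of $LM$ differs from the reduced one by a $\k[u]$ summand), so without this map, or an equivalent excision-type argument, your outline never actually lands in $\mathrm{CC}_*(C)[1]$.
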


\begin{proof}
The Hochschild complex $\mathrm{Hoch}_*(V)$ is quasi-isomorphic to the normalized Hochschild complex $\prod_{n=0}^\infty V\otimes C[1]^{\otimes n}$ whose differential $b$ is given by the same formula (\ref{twisted_diff}). 
For the rest of this proof, we shall denote by
$\mathrm{Hoch}_*(V)$ 
 the normalized Hochschild complex of $V$. Then Connes' operator $B$ is defined by 
$$\begin{array}{cccl}
B:&\mathrm{Hoch}_*(V)&\longrightarrow&\mathrm{Hoch}_{*+1}(V)\\
&(a_0,a_1,\cdots,a_n)&\longmapsto&\displaystyle\sum_{i=1}^n\varepsilon(a_0)(a_i,\cdots,a_n,
a_1,\cdots, a_{i-1}),\end{array}$$
One has $B^2=0$, $b\circ B+B\circ b=0$.
Let $u$ be a formal variable of degree 2, and define the differential
$$b+u^{-1}B: \mathrm{Hoch}_*(V)[u]\to \mathrm{Hoch}_*(V)[u]$$ by
$$(b+u^{-1}B)(\a\otimes u^n)=\left\{\begin{array}{ll} b(\a)\otimes u^n+B(\a)\otimes u^{n-1},&\mbox{if}\quad n>0,\\
b(\a),&\mbox{if}\quad n=0,\end{array}\right.$$

Since $A$ is quasi-isomorphic to $C^*(M)$, it follows by a well-known result
of Jones (see \cite[Theorem A]{Jo87} or \cite[Theorem 1.5.1, Corollary 1.5.2]{CV})
that the equivariant chain complex
$C_*^{S^1}(LM)$ is quasi-isomorphic to
$(\mathrm{Hoch}_*(V)[u], b+u^{-1}B)$. Let $$\overline{\mathrm{Hoch}}_*(V)=
C\oplus \prod_{n=1}^\infty V\otimes C[1]^{\otimes n}$$
be the reduced Hochschild complex. Then the reduced equivariant chain complex of 
$LM$
and $(\overline{\mathrm{Hoch}}_*(V)[u], b+u^{-1}B)$ are quasi-isomorphic.
On the other hand, note that  the normalized 
and reduced Hochschild complexes are non-negatively graded, so the
bicomplex $(\overline{\mathrm{Hoch}}_*(V)[u], b+u^{-1}B)$ lies in 
 the first quadrant.
It follows by a standard argument using filtrations (see 
\cite[Proposition 2.2.14]{Loday}) 
that the map
\begin{gather*}
(\overline{\mathrm{Hoch}}_*(V)[u], b+u^{-1}B) \to CC_*(C)[1] \\
\a\otimes u^n \mapsto
\left\{\begin{array}{ll} B(\a) ,&\mbox{if}\quad n=0, \\
0,&\mbox{if}\quad n>0,\end{array}\right.
\end{gather*}
is a quasi-isomorphism. The gives the desired result.
\end{proof}

\begin{remark}
The equivariant homology of $LM$ is isomorphic to the direct sum of the reduced equivariant homology of $LM$ with $\k[u]$ (where $\deg(u)=2$).
\end{remark}

Since $V$ is simply-connected, we may replace the direct product in the
definition of $\mathrm{Hoch}_*(C)$ by direct sum.

\section{Lie bialgebra}\label{Liebialg}

\subsection{Construction of the Lie bialgebra}\label{def_L}

In this section, $C$ is the coaugmentation coideal of  a 
counital coaugmented simply-connected DG open Frobenius algebra $V$ (of degree $\mathsf m$). 
One can also take $C$ to be $V$ itself.
We shall write $\pm$ for signs determined by the usual Koszul convention.

\begin{definition}[Lie coalgebra] 
Let $L$ be a vector space over $\k$.
 A skew-symmetric map $\delta:L\to L\otimes L$
defines a Lie coalgebra structure on $L$ if
\begin{equation}\label{co-Jacobi}(\tau^2+\tau+id)\circ(\delta\otimes id)\circ\delta=0:L\to L\otimes L\otimes L,\end{equation}
where $\tau$ is the permutation $a\otimes b\otimes c\mapsto\pm
c\otimes a\otimes b$, for $a$, $b$, $c\in L$. The map $\delta$ is
called the {\it cobracket} and (\ref{co-Jacobi}) is called the {\it
co-Jacobi identity}.
\end{definition}

\begin{definition}[Lie bialgebra] Suppose $(L,\{\,,\,\})$ is a Lie
algebra and $(L, \delta)$ is a Lie coalgebra. The triple $(L,\{\,,\,\},\delta)$ defines a Lie bialgebra on $L$ if the following identity,
called the Drinfeld compatibility, holds:
\begin{equation}\label{Drinfeld_compatibility}\delta\{a,b\}=\{id\otimes a+a\otimes id,\delta(b)\}+\{\delta(a),id\otimes b+b\otimes id\}.\end{equation}
If moreover, $\{\,,\,\}\circ\delta:L\to L$ vanishes identically, the Lie bialgebra $(L,\{\,,\,\},\delta)$ is called {\it involutive}.
\end{definition}

Let $L:=\Big(\mathrm{CC}_*(C)[1]\Big)[\mathsf m-2]
= \mathrm{CC}_*(C)[\mathsf m-1]$, 
where $\mathrm{CC}_*(C)$ is defined in the
previous section.
We shall write elements of
$L$ in the form $N([a_1|\cdots|a_n])$, where
$a_i\in C[1]$ for $i=1,\cdots, n$.

Define on $L$ the following two operators:

$\{\,,\,\}:L\otimes L\to L$ by
\begin{equation}\label{Lie_bracket}\{\a,\b\}:=\sum_{i,j}\pm
\varepsilon(a_i\cdot
b_j)
N([a_{i+1}|\cdots|a_n|a_{1}|\cdots|a_{i-1}|b_{j+1}|\cdots|b_m|b_1|\cdots|b_{j-1}])\end{equation}
and $\delta:L\to L\otimes L$ by
\begin{equation}\label{wedge_prod}\delta(\a):=\sum_{i<j}\pm
\varepsilon(a_i\cdot
a_j) N([a_1|\cdots|a_{i-1}|a_{j+1}|\cdots|a_n])\wedge
N([a_{i+1}|\cdots|a_{j-1}]),\end{equation} for any homogeneous
$\a=N([a_1|\cdots|a_n]),\b=N([b_1|\cdots|b_m])\in L$, where in above
$\varepsilon$ is the coaugmentation, and in (\ref{wedge_prod}),
$a\wedge b$ means $a\otimes b-b\otimes a$, and will also be
written as $a\otimes b-\mathrm{Alt}$.

\begin{theorem}\label{thm_Liebialg}Let $L$ be as above. Then $(L,\{\,,\,\},\delta)$ forms an involutive DG Lie bialgebra.\end{theorem}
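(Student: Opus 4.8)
The plan is to verify, in order, the five structural properties that define an involutive DG Lie bialgebra: that $\{\,,\,\}$ and $\delta$ are well-defined chain maps on $L$, that $\{\,,\,\}$ is a Lie bracket (skew-symmetry and Jacobi), that $\delta$ is a cobracket (co-Jacobi), that the Drinfeld compatibility (\ref{Drinfeld_compatibility}) holds, and finally that $\{\,,\,\}\circ\delta=0$. Before any of these, I would first fix the Koszul signs: the formulas (\ref{Lie_bracket}) and (\ref{wedge_prod}) are written with $\pm$, so the genuine first task is to pin down those signs explicitly from the cyclic/shifted-complex conventions so that all subsequent identities can be checked unambiguously. The degree shift by $\mathsf m-1$ is chosen precisely so that the Frobenius pairing $\varepsilon(a_i\cdot b_j)$, which has degree $-\mathsf m$, produces an operation of the correct (zero) degree; I would record this bookkeeping at the outset.

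The structural checks split naturally into two flavors. The bracket $\{\,,\,\}$ fuses two cyclic words by contracting one letter $a_i$ against one letter $b_j$ via the Frobenius counit $\varepsilon(a_i\cdot a_j)$ and concatenating the remaining cyclic sequences; the cobracket $\delta$ self-contracts a single cyclic word at two positions $a_i,a_j$, splitting it into a wedge of two shorter cyclic words. The first serious step is to prove that both descend to the Connes complex and commute with the differential $b$. For well-definedness on $\mathrm{CC}_*(C)$ I would use that elements are images of the norm operator $N$ and check invariance under the cyclic rotation; this is where the cocommutativity of the coproduct and commutativity of the product on $V$ enter. For the chain-map property I expect the module-compatibility identity (\ref{Frobenius_id}) to be the workhorse: the external differential (\ref{external1})+(\ref{external2}) splits a letter via $\Delta$, and (\ref{Frobenius_id}) is exactly what converts a $b$ applied near a contracted letter into a contraction of a split letter, so that the commutator $[b,\{\,,\,\}]$ and $[b,\delta]$ telescope to zero.

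Skew-symmetry of the bracket and of the cobracket should follow formally from the sign conventions together with the symmetry of the Frobenius pairing $\varepsilon(a\cdot b)=\pm\varepsilon(b\cdot a)$ and cocommutativity. For the Jacobi identity, co-Jacobi identity, and Drinfeld compatibility, the strategy is the standard one for necklace-type Lie bialgebras (following Ginzburg and Schedler): expand both sides as sums over pairs (or triples) of contraction sites on the cyclic words, and exhibit a sign-reversing involution pairing up the unwanted terms so that only the matching terms survive. These are essentially combinatorial identities about gluing and cutting cyclic sequences; the associativity of the product and coassociativity of the coproduct guarantee that iterated contractions can be reassociated, and the module-compatibility again mediates the terms where a differential or a second contraction interacts with a previously contracted site.

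I expect the main obstacle to be the Drinfeld compatibility (\ref{Drinfeld_compatibility}), since it mixes the bracket and cobracket and therefore requires simultaneous control of both the cut-and-rejoin combinatorics and the full Koszul sign ledger across terms of two different shapes; getting every sign to cancel is where the real labor lies, and indeed the authors flag that their main computation sits in the proof of Lemma \ref{subcomplex}. The involutivity $\{\,,\,\}\circ\delta=0$ I anticipate to be comparatively clean: after $\delta$ splits a word into two pieces by contracting positions $i<j$, applying $\{\,,\,\}$ recontracts one letter from each piece, and the resulting sum admits an obvious sign-reversing involution (swapping the roles of the two contraction sites, using skew-symmetry of the wedge) that kills everything. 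So my overall plan is to set up the sign conventions, dispatch well-definedness and the chain-map property using (\ref{Frobenius_id}), then handle skew-symmetry formally, and finally grind the Jacobi, co-Jacobi, and Drinfeld identities by the cut-and-glue involution method, reserving the most care for the Drinfeld term matching.
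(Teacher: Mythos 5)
Your plan follows essentially the same route as the paper: skew-symmetry from graded (co)commutativity after the degree shift, Jacobi/co-Jacobi and Drinfeld compatibility by expanding over contraction sites and cancelling paired terms via cyclic invariance of $N$, the chain-map property via the module compatibility (\ref{Frobenius_id}), and involutivity by the same cancellation mechanism. The only slight misstatement is your appeal to Lemma \ref{subcomplex} as the hard part of this theorem --- that lemma belongs to the quantization in Section \ref{quantization}, not to the Lie bialgebra verification, which the authors carry out by the direct term-matching computations you describe.
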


The Lie bracket $\{\,,\,\}$ is of degree $0$, 
 and the Lie cobracket $\delta$ is of 
degree $2(2-\mathsf m)$.
The rest of this section is devoted to the proof of Theorem \ref{thm_Liebialg}.
The proof is divided into several steps.

\subsection{Proof of the DG Lie algebra} 
The  product on $V$
is graded commutative, hence if we shift the degree of $C$ down by
1, the induced pairing $\varepsilon(a\cdot b): C[1]\otimes
C[1]\to \k$ is graded skew-symmetric. Therefore the bracket
$\{\,,\,\}$ defined by (\ref{Lie_bracket}) is graded skew-symmetric.
We now show the Jacobi identity: for any $\a=N([a_1|\cdots|a_n]),
\b=N([b_1|\cdots|b_m]),\gamma=N([c_1|\cdots|c_p])\in L$,
\begin{eqnarray}
&&\{\{\a,\b\},\gamma\}\nonumber\\
&=&\sum_{i,j,k,l}\pm \varepsilon(a_ib_j)\varepsilon(a_kc_l)N([a_1|\cdots|b_{j+1}|\cdots|b_{j-1}|\cdots|c_{l+1}|\cdots|c_{l-1}|\cdots|a_n])\label{jacobi1}\\
&+&\sum_{i,j,k,l}\pm \varepsilon(a_ib_j)\varepsilon(b_kc_l)N([a_1|\cdots|b_{j+1}|\cdots|c_{l+1}|\cdots|c_{l-1}|\cdots|b_{j-1}|\cdots|a_n]),\label{jacobi2}
\end{eqnarray}
Similarly, we have
\begin{eqnarray}
&&\{\{\b,\gamma\},\a\}\nonumber\\
&=&\sum_{i,j,k,l}\pm \varepsilon(b_jc_l)\varepsilon(b_ka_i)N([b_1|\cdots|c_{l+1}|\cdots|c_{l-1}|\cdots|a_{i+1}|\cdots|a_{i-1}|\cdots|b_m])\label{jacobi3}\\
&+&\sum_{i,j,k,l}\pm \varepsilon(b_jc_l)\varepsilon(c_ka_i)N([b_1|\cdots|c_{l+1}|\cdots|a_{i+1}|\cdots|a_{i-1}|\cdots|c_{l-1}|\cdots|b_m]),\label{jacobi4}
\end{eqnarray}
and
\begin{eqnarray}
&&\{\{\a,\b\},\gamma\}\nonumber\\
&=&\sum_{i,j,k,l}\pm \varepsilon(c_la_i)\varepsilon(c_kb_j)N([c_1|\cdots|a_{i+1}|\cdots|a_{i-1}|\cdots|b_{j+1}|\cdots|b_{j-1}|\cdots|c_p])\label{jacobi5}\\
&+&\sum_{i,j,k,l}\pm \varepsilon(c_la_i)\varepsilon(a_kb_j)N([c_1|\cdots|a_{i+1}|\cdots|b_{j+1}|\cdots|b_{j-1}|\cdots|a_{i-1}|\cdots|c_p]).\label{jacobi6}
\end{eqnarray}
Note that by the cyclic invariance of $N$, (\ref{jacobi1}) cancels with (\ref{jacobi6}),
so do (\ref{jacobi2}) with (\ref{jacobi3}) and (\ref{jacobi4}) with (\ref{jacobi5}).
This proves the Jacobi identity.

We next show that $b$ respects the bracket. It is easy to see that the bracket thus defined commutes with the internal differential, hence we only check that it commutes with the external differential.
For any $\a=N([a_1|\cdots|a_n]), \b=N([b_1|\cdots|b_m])$,
$$b(\a)=\sum_{i=1}^n N([a_1|\cdots|a_i'|a_i''|\cdots|a_n]),\quad\mbox{and}\quad b(\b)=\sum_{j=1}^n N([b_1|\cdots|b_j'|b_j''|\cdots|b_m]).$$
Therefore,
\begin{eqnarray}
&&\{b(\a),\b\}\nonumber\\
&=& \sum_{i,k,l}\pm\varepsilon(a_kb_l)N([a_{k+1}|\cdots|a_n|a_1|\cdots|a_i'|a_i''|\cdots|a_{k-1}|b_{l+1}|\cdots|b_m|b_1|\cdots|b_{l-1}])\label{Part_1}\\
&+&\sum_{i,l}\pm\varepsilon(a_i'b_l)N([a_i''|\cdots|a_n|a_1|\cdots|a_{i-1}|b_{l+1}|\cdots|b_m|b_1|\cdots|b_{l-1}])\label{Part_2}\\
&+&\sum_{i,l}\pm\varepsilon(a_i''b_l)N([a_{i+1}|\cdots|a_n|a_1|\cdots|a_{i-1}|a_i'|b_{l+1}|\cdots|b_m|b_1|\cdots|b_{l-1}]),\label{Part_3}\\
&&\{\a,b(\b)\}\nonumber\\
&=&\sum_{j,k,l}\pm\varepsilon(a_kb_l)N([a_{k+1}|\cdots|a_n|a_1|\cdots|a_{k-1}|b_{l+1}|\cdots|b_j'|b_j''|\cdots|b_m|b_1|\cdots|b_{l-1}])\label{Part_4}\\
&+&\sum_{k,j}\pm\varepsilon(a_kb_j')N([a_{k+1}|\cdots|a_n|a_1|\cdots|a_{k-1}|b_j''|\cdots|b_m|b_1|\cdots|b_{j-1}])\label{Part_5}\\
&+&\sum_{k,j}\pm\varepsilon(a_kb_j'')N([a_{k+1}|\cdots|a_n|a_1|\cdots|a_{k-1}|b_{j+1}|\cdots|b_m|b_1|\cdots|b_{j-1}|b_j']),\label{Part_6}
\end{eqnarray}
while
\begin{eqnarray}
&&b\{\a,\b\}\nonumber\\
&=& \sum_{i,k,l}\pm\varepsilon(a_kb_l)N([a_{k+1}|\cdots|a_n|a_1|\cdots|a_i'|a_i''|\cdots|a_{k-1}|b_{l+1}|\cdots|b_m|b_1|\cdots|b_{l-1}])\label{Part_7}\\
&+&\sum_{j,k,l}\pm\varepsilon(a_kb_l)N([a_{k+1}|\cdots|a_n|a_1|\cdots|a_{k-1}|b_{l+1}|\cdots|b_j'|b_j''|\cdots|b_m|b_1|\cdots|b_{l-1}]).\label{Part_8}
\end{eqnarray}
Note that (\ref{Part_2}) and (\ref{Part_6}) cancel, so do
(\ref{Part_3}) and (\ref{Part_5}). The remaining terms of
$\{b(\a),\b\}+\{\a,b(\b)\}$ are identical to
$(\ref{Part_7})+(\ref{Part_8})$, which is exactly $b\{\a,\b\}$.

\subsection{Proof of the DG Lie coalgebra} 
The cobracket is
skew-symmetric. The co-Jacobi identity holds due by a similar computation as the Jacobi identity, and so we leave its verification to the reader.

Next, we show that $b$ respects the cobracket. As before, we check 
that the external differential commutes with the cobracket: by definition,
\begin{eqnarray}
&&\delta N([a_1|\cdots|a_n])\nonumber\\
&=&\sum_{i<j}\pm\varepsilon(a_ia_j)N([a_1|\cdots|a_{i-1}|a_{j+1}|\cdots|a_n])\otimes
N([a_{i+1}|\cdots|a_{j-1}])\nonumber-\mathrm{Alt},
\end{eqnarray}
hence
\begin{eqnarray}
&&b(\delta N([a_1|\cdots|a_n])\nonumber\\
&=&\sum_{i<j, k}\pm\varepsilon(a_ia_j)N([a_1|\cdots|a_k'|a_k''|\cdots|a_{i-1}|a_{j+1}|\cdots|a_n])\otimes N([a_{i+1}|\cdots|a_{j-1}])-\mathrm{Alt}\quad\label{Part_9}\\
&+&\sum_{i<j,
l}\pm\varepsilon(a_ia_j)N([a_1|\cdots|a_{i-1}|a_{j+1}|\cdots|a_n])\otimes
N([a_{i+1}|\cdots|a_l'|a_l''|\cdots|a_{j-1}])-\mathrm{Alt},\quad\label{Part_10}
\end{eqnarray}
while $\delta \sum_k N([a_1|\cdots|a_k'|a_k''|\cdots|a_n])$ has not only (\ref{Part_9}) and (\ref{Part_10}), but also
\begin{eqnarray}
&&\sum_{i,k}\pm\varepsilon(a_ia_k')N(a_1|\cdots|a_{i-1}|a_k''|\cdots|a_n])\otimes N([a_{i+1}|\cdots|a_{k-1}])-\mathrm{Alt}\label{Part_11}\\
&+&\sum_{i,k}\pm\varepsilon(a_ia_k'')N([a_1|\cdots|a_{i-1}|a_{k+1}|\cdots|a_n])\otimes N([a_{i+1}|\cdots|a_{k-1}|a_k'])-\mathrm{Alt}\quad\label{Part_12}\\
&+&\sum_{k,j}\pm\varepsilon(a_i'a_j)N([a_1|\cdots|a_{k-1}|a_{j+1}|\cdots|a_n])\otimes N([a_{k}''|\a_{k+1}|\cdots|a_{j-1}])-\mathrm{Alt}\quad\label{Part_13}\\
&+&\sum_{k,j}\pm\varepsilon(a_k''a_j)N([a_1|\cdots|a_k'|a_{j+1}|\cdots|a_n])\otimes
N([a_{k+1}|\cdots|a_{j-1}])-\mathrm{Alt}.\label{Part_14}
\end{eqnarray}
Since $C$ is a open Frobenius algebra, by the module compatibility
(\ref{Frobenius_id}), (\ref{Part_11}) cancels with (\ref{Part_14}),
and (\ref{Part_12}) cancels with (\ref{Part_13}), and hence $b$
commutes with the cobracket.

\subsection{Proof of the Drinfeld compatibility}
Let $\a=N([a_1|\cdots|a_n])$ and $\b=N([b_1|\cdots|b_m])$, and write
$\delta(\a)=\a^{(1)} \ot \a^{(2)}$ and $\delta(\b)=\b^{(1)}\ot
\b^{(2)}$.

We have
$$\{\a,\b\}
= \sum_{i,j} \pm \eps(a_ib_j)
N([a_{i+1}|\cdots|a_{i-1}|b_{j+1}|\cdots|b_{j-1}] )$$ and
\begin{align*}
& \delta\{\a,\b\} \\
=&  \sum_{i,j,k,l} \pm \eps(a_ib_j)\eps(a_k a_l)
N([a_{k+1}|\cdots|a_{l-1}]) \ot N([a_{l+1}|\cdots|a_{i-1}|b_{j+1}|\cdots|b_{j-1}|a_{i+1}|\cdots|a_{k-1}] )\\
+&   \sum_{i,j,k,l} \pm \eps(a_ib_j)\eps(a_k b_l)
N([a_{k+1}|\cdots|a_{i-1}|b_{j+1}|\cdots|b_{l-1}]) \ot
N([b_{l+1}|\cdots|b_{j-1}|a_{i+1}|\cdots|a_{k-1}]) \\
+ & \sum_{i,j,k,l} \pm \eps(a_ib_j)\eps(a_l a_k)
N([a_{k+1}|\cdots|a_{i-1}|b_{j+1}|\cdots|b_{j-1}|a_{i+1}|\cdots|a_{l-1}])
\ot
N([a_{l+1}|\cdots|a_{k-1}]) \\
+ & \sum_{i,j,k,l} \pm \eps(a_ib_j)\eps(b_k b_l)
N([b_{k+1}|\cdots|b_{l-1}]) \ot
N([b_{l+1}|\cdots|b_{j-1}|a_{i+1}|\cdots|a_{i-1}|b_{j+1}|\cdots|b_{k-1}]) \\
+ & \sum_{i,j,k,l} \pm \eps(a_ib_j)\eps(b_k a_l)
N([b_{k+1}|\cdots|b_{j-1}|a_{i+1}|\cdots|a_{l-1}]) \ot
N([a_{l+1}|\cdots|a_{i-1}|b_{j+1}|\cdots|b_{k-1}]) \\
+ &\sum_{i,j,k,l} \pm \eps(a_ib_j)\eps(b_l b_k)
N([b_{k+1}|\cdots|b_{j-1}|a_{i+1}|\cdots|a_{i-1}|b_{j+1}|\cdots|b_{l-1}])
\ot N([b_{l+1}|\cdots|b_{k-1}])
\end{align*}

In above, the second summation and the fifth summation cancel with
each other. The first summation is equal to $\a^{(1)} \ot
\{\a^{(2)}, \b\}$; the third summation is equal to $\{\a^{(1)}, \b\}
\ot \a^{(2)}$; the forth summation is equal to $\b^{(1)} \ot \{\a,
\b^{(2)}\}$; and the sixth summation is equal to $\{\a, \b^{(1)}\}
\ot \b^{(2)}$. Thus we obtain the Drinfeld compatibility.

\subsection{Proof of the involutivity}
 Let $\a=N([a_1|\cdots|a_n])$, then
\begin{eqnarray*}
\delta(\a)&=&\sum_{i<j}\pm\varepsilon(a_ia_j)N([a_1|\cdots|a_{i-1}|a_{j+1}|\cdots|a_n])\otimes N([a_{i+1}|\cdots|a_{j-1}])\\
&-&\sum_{i<j}\pm\varepsilon(a_ia_j)N([a_{i+1}|\cdots|a_{j-1}])\otimes N([a_1|\cdots|a_{i-1}|a_{j+1}|\cdots|a_n]).
\end{eqnarray*}
By a similar argument as above, one checks that
$\{\,,\,\}\circ\delta=0$ holds identically.

The above constructions and proofs, 
except for the compatibilities of the differential
with the Lie bracket and cobracket, are similar to the proof of the Lie
bialgebras of Turaev \cite{Turaev}, Chas-Sullivan \cite{CS02}, Hamilton \cite{Hamilton} and Schedler \cite{Schedler}.

\section{Quantization of the Lie bialgebra}\label{quantization}

\subsection{Construction of the Hopf algebra}
In this section, we construct a DG Hopf algebra which quantizes the DG Lie 
bialgebra of section \ref{Liebialg}. We follow Schedler \cite{Schedler} closely.
We will also define a new differential in Definition \ref{diffb} below which is 
not present in \cite{Schedler}.

\begin{definition}[Quantization] Let $h$ be a formal parameter.
Suppose $A$ is a Hopf algebra over $\k[h]$. We say $A$ quantizes
the Lie bialgebra $(L,\{\,,\,\},\delta)$ if there is a Hopf algebra
isomorphism
$$\phi:A/hA\stackrel{\cong}{\longrightarrow} U(L),$$
where $U(L)$ is the universal enveloping algebra of $L$, such that
for any $x_0\in L$, and any $x\in A$, $\phi(x)=x_0$,
$${1\over h}(\Delta(x)-\Delta^{\rm op}(x))\equiv \delta(x_0)\mod h,$$
where $\Delta^{\rm op}$ is the opposite comultiplication of
$A$.\end{definition}

\begin{definition}  \label{CH}
Let $CH:= C\otimes_\k \k[\mu, \mu^{-1}]$, 
where $\mu$ is a formal variable (of degree 0).
We shall write an element $a\otimes \mu^u \in CH$ as $(a,u)$ and
call  $u\in \mathbb Z$ the \emph{height} of $(a,u)$.
\end{definition}

Let $\widehat{\mathrm{Hoch}}_*(CH)$ be the graded vector space
$\bigoplus_{n=0}^\infty CH\otimes CH[1]^{\otimes n}$, and denote by
$\widehat{\mathrm{CC}}_*(CH)$ the subspace of cyclically invariant elements in 
$\widehat{\mathrm{Hoch}}_*(CH)$. 
Let $$LH:= \Big( \widehat{\mathrm{CC}}_*(CH) [1]\Big)[\mathsf m -2]
=\widehat{\mathrm{CC}}_*(CH) [\mathsf m-1] .$$
There is a canonical
projection $LH\to L$ by forgetting the heights in $LH$ (recall $L$
is given in \S\ref{def_L}).
Let $SLH$ be the symmetric algebra of $LH$.

\begin{definition} \label{diffb}
Define a differential $b$ on $SLH$ so that on the homogeneous components it is
given by
\begin{eqnarray}
&& b\Big( 
N([(a_{1,1},h_{1,1})|\cdots|
(a_{1,p_1},h_{1,p_1})])\bullet\cdots\bullet
N([(a_{n,1},h_{n,1})|\cdots|(a_{n,p_n},h_{n,p_n})])
\Big) \\
&:=&  
-  \sum_{i=1}^n  \sum_{j=1}^{p_i} 
\pm \cdots\bullet N([(a_{i,1},h_{i,1})|\cdots|(da_{i,j},h_{i,j})|\cdots|
(a_{i,p_i},h_{i,p_i})])\bullet \cdots \\
&&+ \sum_{i=1}^n
\sum_{j=1}^{p_i} \sum_{(a_{(i,j)})} \pm
N([(a_{1,1},\widehat h_{1,1})|\cdots| (a_{1,p_1},\widehat h_{1,p_1})])\bullet \cdots  \label{bh0} \\
&&  \cdots \bullet
N([(a_{i,1},\widehat h_{i,1})|\cdots|
(a_{i,j}',h_{i,j})| (a_{i,j}'',h_{i,j}+1)|
\cdots|(a_{i,p_i},\widehat h_{i,p_i})]) \bullet  \cdots \label{bh} 
\end{eqnarray}
where in (\ref{bh0}) and (\ref{bh}), for all $(i',j')\neq (i,j)$, 
\begin{equation}\label{hath}
\widehat h_{i',j'} = \left\{\begin{array}{ll}
  h_{i',j'} & \mbox{ if } h_{i',j'}\leq h_{i,j} \\
  h_{i',j'}+1 & \mbox{ if } h_{i',j'}>h_{i,j} .
\end{array}\right.
\end{equation}
\end{definition}

\begin{remark}In the above definition, we
assign $a_{i,j}'$ and $a_{i,j}''$ with heights $h_{i,j}$ and
$h_{i,j}+1$ respectively, and raise all heights $h_{i',j'}$ 
greater than $h_{i,j}$ to 
$h_{i,j}+1$. The coassociativity of $C$ implies that $b^2=0$.
\end{remark}

Let $h$ be a formal parameter of degree $2(\mathsf m-2)$.
The differential $b$ on $SLH$ extends to a differential on the $\k[h]$-module
$SLH[h]$.
Consider the subcomplex of $SLH[h]$ which is spanned by elements
whose homogeneous components are of the form:
\begin{equation}\label{typical_elmt}
N[(a_{1,1},h_{1,1})|\cdots|
(a_{1,p_1},h_{1,p_1})])\bullet\cdots\bullet
N([(a_{k,1},h_{k,1})|\cdots|(a_{k,p_k},h_{k,p_k})]),
\end{equation}
where all the $h_{i,j}$ are distinct. Denote this subcomplex by
$\widetilde{SLH}[h]$.

Let $\tilde A$ be the quotient module of $\widetilde{SLH}[h]$ defined by
identifying any element of the form (\ref{typical_elmt}) with other
elements obtained by replacing $h_{i,j}$ with any $\tilde h_{i,j}$ satisfying
$h_{i,j}<h_{i',j'}$ if and only if $\tilde h_{i,j}<\tilde h_{i',j'}$.  Let $\tilde B$ be the submodule of $\tilde A$
generated by elements of the following form:
\begin{equation}
\begin{array}{ll}X-X_{i,j,i',j'}'-X_{i,j,i',j'}'', &\mbox{
where }
i\ne i', h_{i,j}<h_{i',j'},\\
& \mbox{ and }\nexists (i'',j'') \mbox{ with }
h_{i,j}<h_{i'',j''}<h_{i',j'};\end{array}\label{equiv_rel1}\end{equation}
\begin{equation}
\begin{array}{ll}X-X_{i,j,i,j'}'-hX_{i,j,i,j'}'', &\mbox{ where }
h_{i,j}<h_{i,j'},\\
&\mbox{ and } \nexists (i'',j'') \mbox{ with
}h_{i,j}<h_{i'',j''}<h_{i,j'},
\end{array}\label{equiv_rel2}\end{equation}
where the $X'$ and $X''$ terms are defined as follows: if $i\ne i'$,
$X_{i,j,i',j'}'$ is the same as $X$ except that the heights
$h_{i,j}$ and $h_{i',j'}$ are interchanged, while $X_{i,j,i',j'}''$
replaces the components
$N([(a_{i,1},h_{i,1})|\cdots|(a_{i,p_i},h_{i,p_i})])$ and
$N([(a_{i',1},h_{i',1})|\cdots|(a_{i',p_{i'}},h_{i',p_{i'}})])$ by
$$\pm\varepsilon( a_{i,j}a_{i',j'})
N([(a_{i,j+1},h_{i,j+1})|\cdots|(a_{i,j-1},h_{i,j-1})|(a_{i',j'+1},h_{i',j'+1})|
\cdots|(a_{i',j'-1},h_{i',j'-1})]);$$ similarly, $X_{i,j,i,j'}'$ is
the same as $X$ but with the heights $h_{i,j}$ and $h_{i,j'}$
interchanged, while $X_{i,j,i,j'}''$ is given by replacing the
component with the following two components
$$\pm\varepsilon( a_{i,j}a_{i,j'}) N([(a_{i,j'+1},h_{i,j'+1})|\cdots|(a_{i,j-1},h_{i,j-1})])\bullet N([(a_{i,j+1},h_{i,j+1})|\cdots|(a_{i,j'-1},h_{i,j'-1})]).$$

\begin{lemma}\label{subcomplex}
Let $\tilde A$ and $\tilde B$ be as above. Then $\tilde A$ is a chain complex and $\tilde B$ is a subcomplex of $\tilde A$. \end{lemma}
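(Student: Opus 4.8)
The plan is to treat the two assertions separately: first that the differential $b$ of Definition~\ref{diffb} descends to a well-defined square-zero operator on $\tilde A$, and then that it preserves the submodule $\tilde B$. For the first assertion I would begin by checking that $b$ preserves distinctness of heights, so that it restricts to an operator on $\widetilde{SLH}[h]$. This is immediate from the prescription in (\ref{bh0})--(\ref{bh}): applying the coproduct to a letter $(a_{i,j},h_{i,j})$ inserts the new letter $a_{i,j}''$ at height $h_{i,j}+1$ and, by (\ref{hath}), raises every strictly larger height by one, so a configuration with pairwise distinct heights is carried to a sum of configurations with pairwise distinct heights (the internal term, applying $d$ to a single letter, changes no heights at all). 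Next I would observe that the whole formula for $b$ refers to the heights only through the order comparisons in (\ref{hath}) and through the rule that $a_{i,j}''$ is inserted immediately above $a_{i,j}'$; hence $b$ takes the same value, term by term, on any two representatives identified in $\tilde A$, and therefore descends. That $b^2=0$ on $\tilde A$ is the coassociativity computation already recorded in the remark following Definition~\ref{diffb}, the height bookkeeping being consistent because splitting two distinct letters in either order yields the same final arrangement of inserted heights. This establishes that $\tilde A$ is a chain complex.

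The substance of the lemma is the second assertion, and here I would argue that it suffices to show $b$ sends each generator of $\tilde B$ of the two types (\ref{equiv_rel1}) and (\ref{equiv_rel2}) back into $\tilde B$. Fix a generator $X-X'_{i,j,i',j'}-X''_{i,j,i',j'}$ of type (\ref{equiv_rel1}) and sort the terms of $b(X-X'-X'')$ according to which letter $a_{p,q}$ is split. For the spectator letters, namely those with $(p,q)\neq(i,j),(i',j')$, the three contributions assemble into $b_{p,q}X-b_{p,q}X'-b_{p,q}X''$, which is again a generator of type (\ref{equiv_rel1}): interchanging the heights $h_{i,j},h_{i',j'}$ and forming the joined word both commute with splitting a spectator, and the adjacency hypothesis on $h_{i,j},h_{i',j'}$ survives because the inserted height lies immediately above a letter that is not between them. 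Hence all spectator contributions already lie in $\tilde B$.

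The remaining terms split the two contracted letters $a_{i,j}$ and $a_{i',j'}$; these occur only in $bX$ and $bX'$, since the letters are absent from $X''$, having been consumed by the pairing $\varepsilon(a_{i,j}a_{i',j'})$. The key step is to recognize these leftover terms, after one application of the module compatibility (\ref{Frobenius_id}), as the $X$, $X'$ and $X''$ parts of new generators. Concretely, splitting $a_{i,j}$ creates an adjacent pair $(a_{i,j}'',a_{i',j'})$, and (\ref{Frobenius_id}) identifies the associated contraction $\varepsilon(a_{i,j}''\cdot a_{i',j'})$ with a term already produced by the differential acting on the joined word, so that subtracting the honest generator built from this adjacent pair cancels the leftover. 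The same scheme handles generators of type (\ref{equiv_rel2}), where the two split letters lie in a single cyclic word; here the self-contraction is exactly the mechanism that produces the formal parameter, matching the factor $h$ in (\ref{equiv_rel2}).

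The main obstacle is precisely this last piece of bookkeeping. One must track with care the Koszul signs generated by (\ref{bh0})--(\ref{bh}) together with the height relabelling (\ref{hath}), and then invoke the module compatibility (\ref{Frobenius_id}) in exactly the right position to rewrite the coproduct of a contracted letter as the contraction appearing in a freshly formed generator. Carrying this out in full, for both relation types and with all signs reconciled, is the computation flagged in the introduction as the main computation of the paper, and it is where essentially all of the work resides.
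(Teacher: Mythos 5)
Your first assertion (that $b$ preserves distinctness of heights, depends only on their relative order, and squares to zero on $\tilde A$) is fine and matches the paper, which treats it as immediate. Your overall strategy for the second assertion --- sort the terms of $b(X-X'-X'')$ by which letter is split, assemble the spectator contributions into new generators of type (\ref{equiv_rel1}), and handle the terms splitting the two contracted letters by subtracting fresh generators and cancelling the leftovers --- is also the paper's strategy in outline. The problem is the cancellation mechanism you propose at the crucial step, and since you explicitly defer the computation, this is a genuine gap rather than a matter of presentation. Every letter of the joined word $X''$ is a spectator, so every term of $b(X'')$ --- each carrying the prefactor $\varepsilon(a_i b_j)$ of the full contracted pair, with some spectator letter split --- is already consumed in your spectator assembly $b_{p,q}X-b_{p,q}X'-b_{p,q}X''$. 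There is nothing left in $b(X'')$ to absorb the contraction leftovers, and those leftovers have the wrong shape anyway: they carry prefactors $\varepsilon(a_i''b_j)$ or $\varepsilon(a_i b_j')$ with the complementary half ($a_i'$ or $b_j''$) surviving as a letter of the word, whereas $b(X'')$ terms carry $\varepsilon(a_ib_j)$ with a spectator split. No application of (\ref{Frobenius_id}) identifies the one kind of term with the other; your sketch in effect double-counts $b(X'')$.

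What actually closes the argument, and what the paper's eight interpolating terms (\ref{inter_term1})--(\ref{inter_term8}) encode, is a four-way pairing internal to the leftover terms themselves. Subtracting the honest generator from each of (\ref{X_2}), (\ref{X_4}), (\ref{X'_2}), (\ref{X'_4}) produces two leftovers apiece: a height-interchanged term and a contraction term. The contraction leftover from splitting $a_i$ in $X$ cancels against the contraction leftover from splitting $b_j$ in $X$, because by the module compatibility (\ref{Frobenius_id}) both equal, after summing over the coproduct, $\pm$ the single word with the product $a_i\cdot b_j$ inserted in the common position; this is the identity $(\ref{inter_term2})=-(\ref{inter_term6})$, and the same happens within $b(X')$, giving $(\ref{inter_term4})=-(\ref{inter_term8})$. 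The height-interchanged leftovers, by contrast, cancel between the $X$ side and the $X'$ side, using the defining identification of $\tilde A$ that only the relative order of heights matters: these are $(\ref{inter_term1})=(\ref{inter_term3})$ and $(\ref{inter_term5})=(\ref{inter_term7})$. Your proposal pairs contraction leftovers with $b(X'')$ and says nothing about how the height-interchanged leftovers cancel, so the central computation --- the one the introduction flags as the main computation of the paper --- is not only omitted but sketched along lines that would not succeed.
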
 

\begin{proof}
It is clear that $b$ is well-defined on $\tilde A$, and so $\tilde A$ is a 
chain complex.
We have to check that $\tilde B$ is a subcomplex of $\tilde A$.

The equivalence relation (\ref{equiv_rel1}) only involves operations on two components in the elements
of $\widetilde{SLH}[h]$, so without loss of generality, we may assume
$$X=N([(a_1,2h_1)|\cdots|(a_n,2h_n)])\bullet N([(b_1,2g_1)|\cdots|(b_m,2g_m)]).$$
Suppose that in $X$, the heights $2h_i$ and $2g_j$ satisfy condition (\ref{equiv_rel1}). Then
\begin{eqnarray*}
&&X-X'-X''\\
&=&\pm N([(a_1,2h_1)|\cdots|(a_n,2h_n)])\bullet N([(b_1,2g_1)|\cdots|(b_m,2g_m)])\\
&\mp&N([(a_1,2h_1)|\cdots|(a_i,2g_j)|\cdots|(a_n,2h_n)])\bullet N([(b_1,2g_1)|\cdots|(b_j,2h_i)|\cdots|(b_m,2g_m)])\\
&\mp&\varepsilon(a_ib_j) N([(a_{i+1},2h_{i+1})|\cdots|(a_{i-1},2h_{i-1})|(b_{j+1},2g_{j+1})|
\cdots|(b_{j-1},2g_{j-1})]).\end{eqnarray*}
Therefore
\begin{eqnarray}
b(X)&=&\sum_{k\ne i}\pm N([\cdots|(a_k',2h_k)|(a_k'',2h_k+1)|\cdots])\otimes N([\cdots|(b_l,2g_l)|\cdots])\label{X_1}\\
&+&\sum\pm N([\cdots|(a_i',2h_i)|(a_i'',2h_i+1)|\cdots])\otimes N([\cdots|(b_l,2g_l)|\cdots])\label{X_2}\\
&+&\sum_{l\ne j}\pm N([\cdots|(a_k,2h_k)|\cdots])\otimes N([\cdots|(b_l', 2g_l)|(b_l'',2g_l+1)|\cdots])\label{X_3}\\
&+&\sum\pm N([\cdots|(a_k,2h_k)|\cdots]\otimes N([\cdots|(b_j',2g_j)|(b_j'',2g_j+1)|\cdots]),\label{X_4}\\
b(X')
&=&\sum_{k\ne i}\pm N([\cdots|(a_k',2h_k)|(a_k'',2h_k+1)|\cdots])\otimes N([\cdots|(b_j,2h_i)|\cdots])\label{X'_1}\\
&+&\sum\pm N([\cdots|(a_i', 2g_j)|(a_i'',2g_j+1)|\cdots])\otimes N([(\cdots|(b_j,2h_i)|\cdots)])\label{X'_2}\\
&+&\sum_{l\ne j}\pm N([\cdots|(a_i,2g_j)|\cdots])\otimes N([\cdots|(b_l',2g_l)|(b_l'',2g_l+1)|\cdots])\label{X'_3}\\
&+&\sum\pm N([\cdots|(a_i,2g_j)|\cdots])\otimes N([\cdots|(b_j',2h_i)|(b_j'',2h_{i}+1)|\cdots]),\label{X'_4}\\
b(X'')&=&\sum_{k\ne i}\pm\varepsilon(a_ib_j) N([\cdots|(a_k', 2h_k)|(a_k'',2h_k+1)|\cdots])\label{X''_1}\\
&+&\sum_{l\ne j}\pm\varepsilon(a_ib_j)N([\cdots|(b_l',
2g_l)|(b_l'',2g_l+1)|\cdots]).\label{X''_2}
\end{eqnarray}

It is plain that both $(\ref{X_1})-(\ref{X'_1})-(\ref{X''_1})$ and
$(\ref{X_3})-(\ref{X'_3})-(\ref{X''_2})$ are contained in $\tilde B$. To see that
$(\ref{X_2})+(\ref{X_4})-(\ref{X'_2})-(\ref{X'_4})$ is also 
contained in $\tilde B$, we introduce the following interpolating terms:

\begin{eqnarray}
&& \sum \pm
N([\cdots|(a_i',2h_i)|(a_i'',2g_j)|\cdots])\otimes
N([\cdots|(b_j, 2h_i+1)|\cdots]) , \label{inter_term1}\\
&& \sum  \pm \varepsilon(a_i''b_j)
N([(a_{i+1},2h_{i+1})|\cdots|(a_i',2h_i)|(b_{j+1},2g_{j+1})|\cdots|(b_{j-1},2g_{j-1})]),\label{inter_term2}  \\
&& \sum \pm N([\cdots|(a_i',2h_i)|(a_i'',2g_j+1)|\cdots])\otimes
N([\cdots|(b_j, 2g_j)|\cdots]) , \label{inter_term3}\\
&& \sum \pm\varepsilon(a_i'b_j)N([
(a_{i}'',2g_j+1)|\cdots|(a_{i-1},2h_{i-1})|(b_{j+1},2g_{j+1})|\cdots|(b_{j-1},2g_{j-1})]),\label{inter_term4} \\
&& \sum  \pm N([\cdots|(a_i,2g_j)|\cdots])\otimes
N([\cdots|(b_j', 2h_i)|(b_j'', 2g_j+1)|\cdots]) , \label{inter_term5}\\
&& \sum  \pm\varepsilon(a_i b_j')N([
(a_{i+1},2h_{i+1})|\cdots|(a_{i-1},2h_{i-1})|(b_{j}'',2g_j+1)|\cdots|(b_{j-1},2g_{j-1})]),\label{inter_term6} \\
&& \sum  \pm N([\cdots|(a_i,2h_i+1)|\cdots])\otimes
N([\cdots|(b_j', 2h_i)|(b_j'', 2g_j)|\cdots]) , \label{inter_term7}\\
&&\sum  \pm\varepsilon( a_ib_j'')N([
(a_{i+1},2h_{i+1})|\cdots|(a_{i-1},2h_{i-1})|(b_{j+1},2g_{j+1})|\cdots|
(b_{j}',2h_i)]) . \label{inter_term8}
\end{eqnarray}

One has
\begin{gather*}
(\ref{X_2})-(\ref{inter_term1})-(\ref{inter_term2}) \in\tilde B, \qquad
(\ref{X_4})-(\ref{inter_term5})-(\ref{inter_term6}) \in\tilde B ,\\
(\ref{inter_term3})-(\ref{X'_2})-(\ref{inter_term4}) \in\tilde B,\qquad
(\ref{inter_term7})-(\ref{X'_4})-(\ref{inter_term8}) \in\tilde B.
\end{gather*}
Moreover, 
\begin{gather*}
(\ref{inter_term1})=(\ref{inter_term3}),\quad
(\ref{inter_term5})=(\ref{inter_term7}),\quad
(\ref{inter_term2})=-(\ref{inter_term6}),\quad
(\ref{inter_term4})=-(\ref{inter_term8}).
\end{gather*}
Hence, 
$(\ref{X_2})+(\ref{X_4})-(\ref{X'_2})-(\ref{X'_4})\in \tilde B$.

By  a similar argument, the subspace spanned by 
elements of the form $X-X'-hX''$ in (\ref{equiv_rel2}) 
is also stable under $b$, and therefore $\tilde B$ 
is a subcomplex of $\tilde A$.
\end{proof}

\begin{theorem}\label{thm_quantization}Let $A=\tilde A/\tilde B$.
There is a DG Hopf algebra structure on $A$, which quantizes the DG
Lie bialgebra $(L, \{\,,\,\},\delta)$ of Theorem~\ref{thm_Liebialg}.
Moreover, $A$ is isomorphic to $U(L)[h]$ as $\k[h]$-modules.
\end{theorem}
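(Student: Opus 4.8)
The plan is to follow Schedler \cite{Schedler}: equip $A=\tilde A/\tilde B$ with a product, a coproduct, a unit, a counit and an antipode, check that $b$ is compatible with all of these (so that $A$ is a \emph{DG} Hopf algebra over $\k[h]$), verify the two defining properties of a quantization, and finally prove the $\k[h]$-module freeness by a Poincar\'e--Birkhoff--Witt argument. Throughout I may use Lemma~\ref{subcomplex}, which already guarantees that $A$ is a chain complex.

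First I would define the algebra structure. Since in $\tilde A$ only the relative order of the (distinct) heights matters, given two classes $x,y\in A$ I choose representatives in which every height occurring in $y$ exceeds every height occurring in $x$, and set $x\cdot y:=x\bullet y$, the symmetric product in $SLH$ of the two stacked representatives. This height-stacking product is manifestly associative with unit $1\in\k[h]$; it descends to $A$ because moving the heights of $y$ above those of $x$ only ever invokes the generators (\ref{equiv_rel1}) and (\ref{equiv_rel2}) of $\tilde B$; and $b$ is a derivation for it, directly from the Leibniz-type formula of Definition~\ref{diffb}. Thus $A$ is a DG algebra over $\k[h]$. On single necklaces $x,y\in L$ the relation (\ref{equiv_rel1}) shows that the commutator $x\cdot y-y\cdot x$ equals the bracket $\{x,y\}$ modulo $h$, a first indication that $A/hA\cong U(L)$.

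Next I would define the coalgebra structure. The counit $\eps_A\colon A\to\k[h]$ is projection onto the empty necklace. The coproduct is the operation dual to height-stacking: I split the global linear order of heights into a lower and an upper part in all possible ways, sending the lower beads to the left tensor factor and the upper beads to the right, using the relations to resolve a necklace whose beads straddle the cut. Each single necklace is then primitive modulo $h$, and the relation (\ref{equiv_rel2}), which carries the factor $h$, contributes precisely the splitting (cobracket) terms, so that $\D(\nu)=\nu\ot 1+1\ot\nu+O(h)$ with the order-$h$ part recovering $\d(\nu)$ after antisymmetrization. I would check that $\D$ descends to $A$ (both relations are respected in each tensor slot), that it is coassociative, and that $b$ is a coderivation. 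That $\D$ is an algebra homomorphism---the bialgebra compatibility---is the point where the merge relation (\ref{equiv_rel1}) and the split relation (\ref{equiv_rel2}) must interact correctly, and is the main computational step; it mirrors the Drinfeld compatibility already proved in Theorem~\ref{thm_Liebialg}. Since $A$ is nonnegatively graded and connected (the degree-zero, height-zero part is $\k[h]$), the bialgebra is automatically a Hopf algebra: the antipode is built by the usual recursion on the augmentation ideal and commutes with $b$ by uniqueness.

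Finally I would verify the quantization and the module isomorphism. Setting $h=0$ kills the split relation, so $A/hA$ is the algebra generated by $L$ subject to $x\cdot y-y\cdot x=\{x,y\}$ together with the primitive coproduct; the map $\phi\colon A/hA\to U(L)$ sending a product of necklaces to the corresponding product in $U(L)$ is then a Hopf algebra isomorphism, and the congruence $\tfrac1h(\D(x)-\D^{\mathrm{op}}(x))\equiv\d(x_0)\bmod h$ holds by the computation of the order-$h$ part of $\D$ above. To prove $A\cong U(L)[h]$ as $\k[h]$-modules I would exhibit a basis of normal forms: fixing an order on the necklace generators, I rewrite every monomial, using (\ref{equiv_rel1}) to sort beads of different necklaces and (\ref{equiv_rel2}) to sort beads within one necklace, so that the normal forms are the ordered products of necklaces with $\k[h]$-coefficients. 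Confluence of this rewriting system---equivalently, the diamond-lemma check that resolving an overlapping triple of heights is independent of the order of moves---shows that $A$ is free over $\k[h]$ on the PBW basis of $U(L)$, giving $A\cong U(L)[h]$. I expect this confluence/PBW step to be the main obstacle, since it is precisely what guarantees that passing to the quotient $\tilde A/\tilde B$ does not collapse the module; the required sign-bookkeeping and triple-overlap analysis is of exactly the same nature as the computation carried out for Lemma~\ref{subcomplex}.
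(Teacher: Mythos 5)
Your outline follows the paper's route (both follow Schedler): the height-stacking product, an $h$-corrected coproduct, an antipode, a PBW-type freeness statement, and the mod-$h$ identification of $A/hA$ with $U(L)$ are all as in the paper. Your product, counit, quantization check, and the normal-form/confluence argument for $A\cong U(L)[h]$ are fine variants of what the paper does (the paper cites Schedler's Corollary 4.2, or a Carter-style PBW proof, for Theorem~\ref{PBW}; your diamond-lemma rewriting is the same normal-form idea made explicit), and deriving the antipode from connectedness of the bead-length filtration is a legitimate substitute for the paper's explicit formula (negate all heights and multiply by $(-1)^{\#P_X}$).

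The genuine gap is your coproduct. You define it by splitting the global linear order of heights into a lower and an upper part and sending lower beads left, upper beads right. That is not the coproduct of the paper (or of Schedler), and it fails on its own terms. First, it misses terms: take $X=x_1\bullet x_2$ with the heights of $x_2$ stacked above those of $x_1$; every threshold cut produces only $X\ot 1$, $x_1\ot x_2$, and $1\ot X$, never $x_2\ot x_1$. The paper's $\Delta$ does produce $x_2\ot x_1$, via the $I=\emptyset$ labeling with $f\equiv 2$ on $x_1$ and $f\equiv 1$ on $x_2$ (condition (\ref{condition_2}) is vacuous when $I=\emptyset$, so whole necklaces may go to either factor regardless of their heights). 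Consequently your coproduct mod $h$ is not the symmetric coproduct of $U(L)$ on products of primitives, and your $\phi:A/hA\to U(L)$ cannot be a coalgebra isomorphism, so the quantization property fails. Second, your ``resolution'' of a necklace straddling the cut does not terminate: relation (\ref{equiv_rel2}) only exchanges two heights at the cost of an $h$-split term, so the unsplit term still straddles the cut after every move, giving an infinite regress rather than a finite expression. Indeed no threshold description can be right: in the order-$h$ term of $\Delta$ on a single necklace, the left factor $N([(a_1,1)|\cdots|(a_{i-1},i-1)|(a_{j+1},j+1)|\cdots|(a_n,n)])$ contains beads with heights both below and above all heights in the right factor $N([(a_{i+1},i+1)|\cdots|(a_{j-1},j-1)])$. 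The correct construction, which your proposal is missing, is the paper's $n$-labeling machinery: choose a subset $I$ of beads with a fixed-point-free involution $\phi$ (the contracted pairs, each contributing $\varepsilon(a_{i,j}\cdot a_{\phi(i,j)})$ and a power of $h$), and a labeling $f$ that is constant along arcs, jumps via $\phi$ at beads of $I$ per (\ref{condi_1}), and interacts with heights only through condition (\ref{condition_2}) at contracted pairs; the new necklaces are assembled by the permutations $q$ and $g$ and distributed by $\bar f$. With that definition, well-definedness on $\tilde A/\tilde B$, coassociativity (via $\Delta_3$ and regrouping of labels), compatibility with $b$, and the Hopf identity $\Delta(XY)=\Delta(X)\Delta(Y)$ all go through as in the paper; note also that the Hopf identity is then nearly formal (a height-separation argument kills cross-labelings), so it is not, as you suggest, the main computational step --- the real work sits in Lemma~\ref{subcomplex} and in the well-definedness of $\Delta_n$ on the quotient.
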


The proof of the above theorem is given in the following 
subsections.

\subsection{Proof of DG algebra}

For any two elements $X, X'\in A$, define the product of $X$ and $X'$ as
follows: suppose  $X, X'$ are both represented by elements of the
form (\ref{typical_elmt}); let $X''$ be the element which is the
same as $X'$ but with the corresponding heights replaced by
$1+\max_{i,j,i',j'}(h_{i,j}(X)-h_{i',j'}(X'))$, 
where $h_{i,j}(X)$ are the heights in $X$ and similarly $h_{i',j'}(X')$ are the heights in $X'$.
Thus, $X''$ is obtained from
$X'$ by shifting the heights of the latter such that its heights are
larger than those of $X$. The product of $X$ and $X'$ is defined to be $X\bullet X''$. It is easy to see that this is well-defined, and commutes
with the boundary $b$.

\subsection{Proof of DG coalgebra}

For an element $X$ in the form of (\ref{typical_elmt}), let
$$P\, :=\, P_X \, :=\, \{(i,j)\mid 1\le i\le k, 1\le j\le p_i \} .$$
If $(i,j)\in P$, we let 
$$
(i,j)+(0,1) = \left\{\begin{array}{ll}
(i,j+1) & \mbox{ if } j<p_i, \\
(i,1)    & \mbox{ if } j=p_i,
\end{array}\right.
$$

Let $n$ be an integer greater than or equal to 2.
Now let $I$ be any subset of $P$ such that $\# I$ is even, and let $\phi: I\to I$ be an
involutive, fixed point-free map, where by being involutive we mean
$\phi^2=id$.
We call $(I,\phi,f)$ a
 \emph{$n$-labeling} of $X$ if
$$f:P\to\{1,2,\cdots, n\}$$
is a map such that:
\begin{equation}  \label{condi_1}
f(i,j)=\left\{\begin{array}{ll}f((i,j)+(0,1)),&\mbox{if}\,\, (i,j)\notin I;\\
f(\phi(i,j)+(0,1)),&\mbox{if}\,\, (i,j)\in
I,\end{array}\right.\end{equation} and
\begin{equation}\label{condition_2}
f(i,j)>f(\phi(i,j))\quad\mbox{if and only
if}\quad h_{i,j}>h_{\phi(i,j)},\,\,\mbox{for}\,\,(i,j)\in
I.\end{equation}

For an $n$-labeling $(I,\phi,f)$, let $q: P\to P$ be given by $$(i,j)\mapsto
\left\{\begin{array}{ll}(i,j)+(0,1),&\mbox{if\quad} (i,j)\notin
I,\\
\phi(i,j)+(0,1),&\mbox{otherwise,}\end{array}\right.$$ and define
$$g:P\backslash I\to P\backslash I$$
by the following: for $(i,j)\in P\backslash I$, let $g(i,j)$ be the first element not in $I$ under the iterations of the map $q$. Since $q$ is a permutation of the finite set $P$, $g$ is well-defined.

Suppose the orbits of $P$ under iterations of $q$ is
$\{Q_1,\cdots,Q_w\}$. Then $f$ descends to a map 
$\widehat f: \{Q_1,\cdots,Q_w\}\to\{1, \cdots, n\}$,
where $\widehat f(Q_m)= f(i,j)$ for any $(i,j)\in Q_m$, $1\le m\le w$.

Similarly, suppose the orbits of $P\setminus I$ under iterations of
 $g$ is $\{P_1,\cdots, P_l\}$. Then $f$ descends
to a map $\bar f:\{P_1, \cdots, P_l\}\to\{1, \cdots, n\}$,
where $\bar f(P_m)= f(i,j)$ for any $(i,j)\in P_m$, $1\le m\le l$.
Suppose $P_m$ ($1\le m\le l$)
is the orbit of $(i,j)$ under $g$; then we 
define an element $X_m\in LH$ by
$$X_m=N([(a_{i,j},h_{i,j})|(a_{g(i,j)},h_{g(i,j)})|\cdots]).$$

Let $1\le i\le n$.
Now define an element $X_{(I,\phi,f)}^{(i)}$  in $A$ 
by 
$$
X_{(I,\phi,f)}^{(i)} = \left\{ \begin{array}{ll}
1 & \mbox{ if } f^{-1}(i) =\emptyset ,\\
0 & \mbox{ if } \#(\bar f^{-1}(i))< \# (\widehat f^{-1}(i)), \\
X_{i_1}\bullet\cdots\bullet X_{i_r} & \mbox{ if }
\# (\bar f^{-1}(i))= \# (\widehat f^{-1}(i)) \mbox{ and }
\bar f^{-1}(i)=\{P_{i_1},\cdots, P_{i_r}\}.
\end{array}\right.
$$
The $n$-fold coproduct of $X$ is defined by
$$\Delta_{n} (X) \, :=\, 
\sum_{I, \phi, f}\varepsilon_{(I,\phi,f)} \, h^{(I, \phi,f)} \,
X_{(I,\phi,f)}^{(1)} \otimes \cdots\otimes 
X_{(I,\phi,f)}^{(n)},$$
where
$$
\varepsilon_{(I,\phi,f)}=\prod_{\{(i,j)\in I\, \mid\, f(i,j)<f(\phi(i,j)) \} }\varepsilon(a_{i,j}\cdot
a_{\phi(i,j)}) $$
and
$$
h^{(I,\phi,f)}=h^{ (\# I-2k+2l)/4}.
$$ 
Define $\Delta := \Delta_2$.
The following lemma yields the DG coalgebra on $A$:

\begin{lemma}Let $\Delta_n$ and $\Delta$ be defined as above.  Then

\vi  $\Delta_{n}$ is well-defined.

\vii $\Delta$ is coassociative.

\viii $b$ commutes with $\Delta$.
\end{lemma}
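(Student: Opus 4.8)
The plan is to prove the three assertions in the order \vi, \vii, \viii, since well-definedness must precede any structural statement, and coassociativity is most conveniently phrased as the equality of two iterated coproducts both built from the same combinatorial $n$-labeling data. I would begin with \vi by checking that the formula for $\Delta_n(X)$ descends to the quotient $\tilde A/\tilde B$. The subtlety is that an $n$-labeling $(I,\phi,f)$ depends only on the \emph{relative order} of the heights $h_{i,j}$ (through condition (\ref{condition_2})), not on their actual values, so the sum is manifestly invariant under the height-reordering identifications defining $\tilde A$. The genuine work is to verify that replacing $X$ by one of the generators $X-X'_{i,j,i',j'}-X''_{i,j,i',j'}$ of $\tilde B$ (respectively the $h$-weighted generator from (\ref{equiv_rel2})) sends $\Delta_n$ into the corresponding relation in each tensor factor; here the factor $h^{(I,\phi,f)}=h^{(\#I-2k+2l)/4}$ must track the change in the number $k$ of components and $l$ of $g$-orbits exactly as the relation (\ref{equiv_rel2}) prescribes, which is where the Frobenius pairing $\varepsilon(a_{i,j}a_{\phi(i,j)})$ enters through $\varepsilon_{(I,\phi,f)}$.

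For \vii I would show $\Delta_3 = (\Delta\otimes\mathrm{id})\circ\Delta = (\mathrm{id}\otimes\Delta)\circ\Delta$ by exhibiting a bijection between $3$-labelings of $X$ and pairs of labelings arising from iterating $\Delta_2$. The idea is that a $3$-labeling $(I,\phi,f)$ with $f:P\to\{1,2,3\}$ splits into a ``coarse'' $2$-labeling that separates the values $\{1\}$ from $\{2,3\}$ (or $\{1,2\}$ from $\{3\}$), followed by a $2$-labeling of the piece landing in the two-element block. The map $q$ and the induced orbit decomposition $\{Q_1,\dots,Q_w\}$, together with the secondary map $g$ on $P\setminus I$ and its orbits $\{P_1,\dots,P_l\}$, are precisely engineered so that these composite data match on both sides; the height-comparison condition (\ref{condition_2}) guarantees the pairings $\phi$ are assembled consistently in either order of association. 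The sign $\varepsilon_{(I,\phi,f)}$ and the power of $h$ are additive over the splitting, so they agree on both sides automatically once the bijection is established.

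Finally, for \viii I would verify $b\circ\Delta = \Delta\circ b$ by comparing, term by term, the effect of the differential $b$ of Definition~\ref{diffb} on each summand $X^{(1)}_{(I,\phi,f)}\otimes X^{(2)}_{(I,\phi,f)}$ against the labelings appearing in $\Delta(b(X))$. Applying $b$ splits some $a_{i,j}$ via the coproduct of $C$ into $a'_{i,j},a''_{i,j}$ with consecutive heights, and the key observation is that each such splitting either occurs entirely inside one tensor factor (matching a term of $b$ applied to that factor) or straddles a pair $(i,j),\phi(i,j)\in I$, in which case the module compatibility (\ref{Frobenius_id}) of the open Frobenius structure reconciles the two ways the split interacts with the Frobenius pairing $\varepsilon$. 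This is the same cancellation mechanism already used in \S\ref{Liebialg} to prove that $b$ respects the cobracket $\delta$, now carried out at the quantized level with heights tracked.

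I expect the main obstacle to be \viii, and within it the bookkeeping of the height shifts in (\ref{hath}): when $b$ inserts a new height $h_{i,j}+1$ and raises all strictly larger heights, one must confirm that the resulting relative order still defines a valid $n$-labeling on both the domain and target of $\Delta$, and that the hatted heights on the two tensor factors remain compatible with the identifications of $\tilde A$. The combinatorial bijection for coassociativity in \vii is conceptually delicate but essentially formal once the orbit structure under $q$ and $g$ is understood; by contrast, reconciling the differential with the coproduct requires the Frobenius identity at every crossing and is the step most prone to sign and height errors.
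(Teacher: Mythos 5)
Your proposal is correct and takes essentially the same route as the paper: well-definedness is verified by checking that $\Delta_n$ sends the generators of $\tilde B$ into $\sum_i id^{\otimes i-1}\otimes\tilde B\otimes id^{\otimes n-i}$ (the paper defers this computation to Schedler's one-vertex quiver argument), coassociativity is obtained from $(\Delta\otimes id)\circ\Delta=\Delta_3=(id\otimes\Delta)\circ\Delta$ by grouping labels exactly as you describe, and compatibility with $b$ is a direct verification using the Frobenius module compatibility, mirroring the cobracket argument of the Lie bialgebra section. Your outline is, if anything, more explicit than the paper's proof, which omits most of these details by citation.
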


\begin{proof}
\vi To check that it is well-defined, 
one has to verify that if $\tilde
X\in\tilde B$, then $\Delta_{n-1}(\tilde X)\in
\displaystyle\sum_{i=1}^{n-1}id^{\otimes i-1}\otimes\tilde B\otimes
id^{\otimes n-i}$. The proof of this is completely similar to \cite[\S 3.5]{Schedler}
(by considering a quiver in \cite{Schedler} with just one vertex);  we omit the details.

\vii The proof is similar to \cite[\S 3.7]{Schedler}. We have
$$(\Delta\otimes id)\circ\Delta
=\Delta_3
=(id\otimes\Delta)\circ\Delta.$$
 As explained in \cite[\S 3.7]{Schedler}, one can group the
labelings $1$ and $2$ in $\Delta_3$ into labeling $1'$ and consider $1'$ and $3$;
this gives the first identity. Similarly, grouping the labelings $2$
and $3$ together into $2'$ and considering $1$ and $2'$ gives the second identity.

\viii The proof is by a direct verification similar to
the proof for the DG Lie coalgebra.
\end{proof}

\subsection{The Hopf identity} The proof is similar to \cite[\S 3.8]{Schedler}.
 For any $X, Y\in A$,
\begin{eqnarray*}\Delta(XY)&=&\Delta_2(XY)\\
&=&\sum_{\tiny\mbox{2-labelings\,of\,}XY}(XY)'\otimes
(XY)''\\
&=&\sum_{\tiny\mbox{2-labelings\,of\,} X\mathrm{\,and\,of\,}Y}
X'Y'\otimes
X''Y''  +\sum_{\phi(I\cap P_X)\cap P_Y\ne\emptyset}(XY)'\otimes
(XY)''.\end{eqnarray*}The last summation is over all 2-labelings $(I,\phi,f)$ of
$XY$ such that $\phi(I\cap P_X)\cap P_Y\ne\emptyset$.
 However, in the product $XY$, the heights of $Y$ are all greater than that of
$X$, and if the set $\phi(I\cap P_X)\cap P_Y$ is nonempty, then by
(\ref{condi_1}) and (\ref{condition_2}), one has
$$
0 = \sum_{(i,j)\in P_X}  f(i,j)-f\big( (i,j)+(0,1)\big)  
 =  \sum_{ \{  (i,j)\in I\cap P_X \,\mid\, \phi(i,j)\in P_Y \} } f(i,j)-f\big((i,j)+(0,1)\big) < 0,
$$
a contradiction. Hence, $\Delta(XY)= \Delta(X)\Delta(Y)$.

\begin{remark}
The antipode map $S:A\to A$ is defined by replacing the heights $h_{i,j}$ 
in $X$ by $-h_{i,j}$,
and then multiplying by $(-1)^{\#(P_X)}$.
 See  \cite[\S 3.9]{Schedler}.
\end{remark}

\subsection{Proof of the quantization}
Let $\mathrm B_C$ be a basis for $C$.
Let 
$$\mathrm B_L:= \{N([a_1|\cdots|a_n]) \in L
\mid a_i\in\mathrm B_C \mbox{ for all } i \}.$$
Then $\mathrm B_L$ is a basis for $L$. 
Let $SL$ be the symmetric algebra for $L$ and
$$\mathrm B_{SL}:= \{ x_1\bullet\cdots\bullet x_k \in SL \mid
x_i\in \mathrm B_L \mbox{ for all } i\}.$$
Then $\mathrm B_{SL}$ is a basis for $SL$.
Suppose $x\in \mathrm B_{SL}$ is the element
$$
 N([a_{1,1}|\cdots|a_{1,p_1}])\bullet\cdots\bullet
  N([a_{k,1}|\cdots|a_{k,p_k}])
$$
where $a_{i,j}\in\mathrm B_C$ for all $i,j$.
Then we fix an element $Y(x) \in \tilde{A}$ of the form
(\ref{typical_elmt}) where the sequence $h_{1,1},\ldots, h_{k,p_k}$ is a permutation of $1,2,\ldots, \#P_{Y(x)}$. 
Let $\bar Y(x) := Y(x)+\tilde B \in A$.

\begin{theorem}\label{PBW}
The set
$$ \mathrm B_A := \{\bar Y(x) \in A \mid x\in \mathrm B_{SL}\}$$
is a basis for $A$ over $\k[h]$.
\end{theorem}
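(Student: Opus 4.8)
The plan is to show that $\mathrm{B}_A$ is both a spanning set and a linearly independent set for $A$ over $\k[h]$. I would establish these two properties by carefully analyzing the structure of the quotient $A = \tilde A/\tilde B$, exploiting the relations (\ref{equiv_rel1}) and (\ref{equiv_rel2}) that generate $\tilde B$. The key insight is that these relations allow one to reduce any element of $\tilde A$ to a canonical form, and the elements $Y(x)$ provide exactly one such canonical representative for each basis element $x \in \mathrm{B}_{SL}$.

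First I would prove spanning. Take an arbitrary element of $\tilde A$ represented in the form (\ref{typical_elmt}) with distinct heights $h_{i,j}$. The relation (\ref{equiv_rel1}) expresses $X$ (with a pair of heights in \emph{different} components separated by no intermediate height) in terms of $X'$ (the heights swapped) plus $X''$ (a contraction via $\varepsilon(a_{i,j}a_{i',j'})$ that produces fewer components). The relation (\ref{equiv_rel2}) does the analogous thing for two heights in the \emph{same} component, with the contraction weighted by $h$. By repeatedly applying these relations, one can sort the heights into any desired order while collecting lower-order correction terms (which have strictly fewer total letters, or an extra factor of $h$). An induction on the number of letters $\sum_i p_i$ — or on a suitable lexicographic measure combining the letter-count and the $h$-degree — shows that modulo $\tilde B$, every element reduces to a $\k[h]$-combination of the standard representatives $\bar Y(x)$. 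Hence $\mathrm{B}_A$ spans.

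The harder and more delicate part will be linear independence, and this is where I expect the main obstacle to lie. One must show that no nontrivial $\k[h]$-relation $\sum_x c_x(h)\, \bar Y(x) = 0$ holds in $A$; equivalently, that whenever $\sum_x c_x(h)\, Y(x) \in \tilde B$, all $c_x(h)$ vanish. The difficulty is that $\tilde B$ is defined as the submodule generated by the relations (\ref{equiv_rel1}) and (\ref{equiv_rel2}), so one needs a clean normal-form or invariant argument to certify that the standard representatives remain independent after quotienting. The natural strategy is to construct a well-defined $\k[h]$-linear map $\Psi: \tilde A \to \k[h]\otimes_\k SL$ (or directly onto the free module on $\mathrm{B}_{SL}$) that kills $\tilde B$ and sends each $Y(x)$ to $x$ (up to a unit). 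Concretely, $\Psi$ should record, for each height-ordered representative, the underlying necklace-word data after forgetting the heights, tracking the $h$-powers produced by same-component contractions; the content of checking $\Psi(\tilde B)=0$ is verifying that $\Psi$ respects both defining relations, which by the construction of (\ref{equiv_rel1})--(\ref{equiv_rel2}) amounts to the fact that the forgetful map $LH \to L$ and the product structure are compatible with the swaps and contractions. Since $\Psi(\bar Y(x)) = x$ realizes $\mathrm{B}_{SL}$ as an honest basis of the target, the $\bar Y(x)$ must be independent.

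This also yields the final clause of Theorem~\ref{thm_quantization}, namely that $A \cong U(L)[h]$ as $\k[h]$-modules: by the Poincar\'e--Birkhoff--Witt theorem the set $\mathrm{B}_{SL}$ indexes a $\k$-basis of $U(L)$, so $\mathrm{B}_A$ being a $\k[h]$-basis of $A$ gives precisely the asserted module isomorphism. Throughout, I would follow the scheme of \cite[\S 3.10]{Schedler}, specializing his quiver argument to a quiver with a single vertex; the one genuinely new feature here is bookkeeping the internal differential and the shifted degrees, but those play no role in the module-level statement of Theorem~\ref{PBW}, so the independence argument can be carried out purely combinatorially on the height data.
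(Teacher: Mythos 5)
Your overall scheme---spanning by height--sorting with the relations (\ref{equiv_rel1}) and (\ref{equiv_rel2}), and linear independence via a $\k[h]$-linear projection $\Psi\colon \tilde A\to SL[h]$ that kills $\tilde B$ and sends $Y(x)$ to $x$---is exactly the PBW-style argument the paper has in mind: the paper does not write out a proof at all, but refers to \cite[Corollary 4.2]{Schedler} and remarks that one may alternatively imitate the classical PBW proof in \cite[\S 9.2]{Carter}; an unfinished draft in the source constructs precisely such a map $\tilde\pi\colon\tilde A\to SL[h]$ by induction on total length. Your spanning argument is correct as outlined.

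The genuine gap is in your construction of $\Psi$, at the very step you flag as delicate. The concrete recipe you give---``record the underlying necklace-word data after forgetting the heights, tracking the $h$-powers produced by same-component contractions''---cannot define a map killing $\tilde B$: if $X'$ is obtained from $X$ by interchanging two heights, then $X$ and $X'$ have identical letters and hence identical height-forgotten data, so any map factoring through the forgetful map sends the generator $X-X'-X''$ of $\tilde B$ to $-\Psi(X'')$, which is nonzero in general (it is $\pm\varepsilon(a_{i,j}a_{i',j'})$ times a nontrivial basis-type element). The only workable definition is recursive: set $\Psi(Y(x))=x$ and extend to arbitrary height configurations by rewriting $X\mapsto X'+X''$ (respectively $X'+hX''$), inducting on total length and on the number of height inversions. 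But then the assertion that ``$\Psi$ respects both defining relations'' is not, as you claim, a formal compatibility of the forgetful map $LH\to L$ and the product with swaps and contractions; it is the statement that this rewriting system is \emph{confluent}---that sorting the heights along different sequences of admissible adjacent transpositions produces the same element of $SL[h]$. That consistency check is the entire content of any PBW theorem (in the classical case it is exactly where the Jacobi identity enters), and here it requires the identities underlying Theorem \ref{thm_Liebialg}: cyclic invariance of the pairing $\varepsilon(a\cdot b)$, the Jacobi-type cancellations, and involutivity---or else an appeal to the diamond lemma or directly to \cite[Corollary 4.2]{Schedler}. Without this step the existence of $\Psi$, hence the independence of $\mathrm B_A$, is unproven; your closing claim that the argument is ``purely combinatorial on the height data'' understates exactly this algebraic input. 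The final deduction that $A\cong U(L)[h]$ as $\k[h]$-modules is fine once independence is established.
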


We refer the reader to \cite[Corollary 4.2]{Schedler} for the proof of Theorem \ref{PBW}. (An alternative proof can also be given following the proof of the
usual PBW Theorem for universal enveloping algebras of Lie algebras given, for
example, in \cite[\S 9.2]{Carter}.)
It follows from Theorem \ref{PBW} and the PBW Theorem for $L$ that $A$ is isomorphic to 
$U(L)[h]$ as $\k[h]$-modules.

Note that any element of $L$ has a canonical lifting to an element of $A/hA$ since
by (\ref{equiv_rel2}) the heights do not matter.  Thus,  there is a natural map $L\to A/hA$.
By (\ref{equiv_rel1}), this map induces a homomorphism $\iota: U(L) \to A/hA$. 
It follows by Theorem \ref{PBW} that $\iota$ is an isomorphism of DG Hopf algebras.

Let $x=N([a_1|\cdots|a_n])\in L$,
and assume without loss of generality that its lifting 
$X\in A$ is represented by
$N([(a_1,1)|\cdots|(a_n,n)])$.
From the definition of $\Delta$, we have
\begin{eqnarray*}\Delta(X)&=&1\otimes X+X\otimes 1\\
&+&h\sum_{i<j}\pm\varepsilon(a_ia_j)N([(a_1,1)|\cdots|(a_{i-1},i-1)|\cdots|(a_{j+1},j+1)|\cdots|(a_n,n)])\\
&&\quad\quad\quad\quad\quad\quad\otimes N([(a_{i+1},i+1)|\cdots|(a_{j-1},j-1)])+\mbox{higher order terms}.
\end{eqnarray*}
It follows that
$$\delta(x)\equiv \frac{1}{h}(\Delta(X)-\Delta^{\mathrm{op}}(X))\mod
h.$$

\subsection{Proof of Theorem~\ref{main_thm}}

\vi If $(L,d)$ is a DG Lie algebra, then its universal enveloping
$U(L)$ with the induced differential is a DG Hopf algebra; denote
the induced differential by $b$. We have $H_*(U(L),b)$ is the
enveloping algebra of $H_*(L,d)$ (see Quillen~\cite[Appendix, Proposition 2.1]{Quillen}). Therefore, by Theorems
\ref{thm_Liebialg} and \ref{thm_quantization}, $H_*(A,b)$ quantizes
the Lie bialgebra $HC_*(C)[\mathsf m-1]$.

\vii This is immediate from Theorem \ref{thm_of_Jones} and 
Theorem \ref{main_thm}\vi.

\end{document}